\definecolor{myblue}{rgb}{.8, .8, 1}
\newlength\mytemplen
\newsavebox\mytempbox
\newcommand\mybluebox{%
    \@ifnextchar[
       {\@mybluebox}%
       {\@mybluebox[0pt]}}
\def\@mybluebox[#1]{%
    \@ifnextchar[
       {\@@mybluebox[#1]}%
       {\@@mybluebox[#1][0pt]}}
\def\@@mybluebox[#1][#2]#3{
    \sbox\mytempbox{#3}%
    \mytemplen\ht\mytempbox
    \advance\mytemplen #1\relax
    \ht\mytempbox\mytemplen
    \mytemplen\dp\mytempbox
    \advance\mytemplen #2\relax
    \dp\mytempbox\mytemplen
    \colorbox{myblue}{\hspace{1em}\usebox{\mytempbox}\hspace{1em}}}
\tikzset{paint/.style={ draw=#1!50!black, fill=#1!50 },
    decorate with/.style=
    {decorate,decoration={shape backgrounds,shape=#1,shape size=2mm}}}
\definecolor{skyblue}{rgb}{0.85,0.85,1}
\newtheorem{theorem}{Theorem}
\newtheorem{prop}[theorem]{Proposition}
\newtheorem{cor}[theorem]{Corollary}
\newtheorem{define}[theorem]{Definition}
\newtheorem{lemma}[theorem]{Lemma}
\newtheorem{rem}[theorem]{Remark}
\newtheorem{assumption}[theorem]{Assumption}
\newtheorem{conj}{Conjecture}
\def\Re{\mathop\mathrm{Re}\nolimits}    
\newcommand{\p}{\partial}				
\DeclareMathOperator{\sgn}{sgn}
\newcommand{\eqdef}{\vcentcolon =}
\numberwithin{equation}{section}
\numberwithin{theorem}{section}
\begin{document}
\title{Towards Spectral Convergence of Locally Linear Embedding on Manifolds with Boundary}
\author{Andrew Lyons}\email{ahlyons@email.unc.edu}\address{Dept. of Mathematics, UNC-CH, 418 Phillips Hall, Chapel Hill, NC 27599-3250, USA}

\begin{abstract}
We study the eigenvalues and eigenfunctions of a differential operator that governs the asymptotic behavior of the unsupervised learning algorithm known as Locally Linear Embedding when a large data set is sampled from an interval or disc. In particular, the differential operator is of second order, mixed-type, and degenerates near the boundary. We show that a natural regularity condition on the eigenfunctions imposes a consistent boundary condition and use the Frobenius method to estimate pointwise behavior. We then determine the limiting sequence of eigenvalues analytically and compare them to numerical predictions. Finally, we propose a variational framework for determining eigenvalues on other compact manifolds.
\end{abstract}
\maketitle

\section{Introduction} \label{sec:Intro}

\quad We study the eigenvalues and eigenfunctions of a mixed-type differential operator that informs the behavior of the dimensional limit of the \textit{Locally Linear Embedding} (LLE) matrix, as defined in \cite[Sec.$2$]{WW}. Locally Linear Embedding is a nonlinear algorithm used in data science to reduce the dimension of a data set and has been widely used in a variety of scientific fields. Introduced by Roweis and Saul in $2000$ \cite{SR}, the objective is to determine a map from a subset of $\mathbb{R}^{d_1}$ to $\mathbb{R}^{d_2}$ where $d_1>d_2$ and the local geometry is preserved, as made precise in Section \ref{sec:LLEmatrix}. The underlying idea is that by parametrizing a data set locally, one can recover another data set embedded in a lower dimensional space but with a similar local structure.

\quad The Locally Linear Embedding algorithm has a wide range of applications, from medical classification \cite{Med, SDD} to audiovisual processing \cite{TH, CFJ17}. Related algorithms, such as diffusion maps, have been used to numerically estimate solutions to differential equations on compact manifolds with boundary \cite{CL06,V20}, and LLE has the potential to do the same. This work furthers an effort to understand the differential operator approximated by the LLE matrix $W$ when a data set is sampled from a manifold with boundary and the number of data points is large \cite{WW18, WW}. The differential operator of interest is piecewise defined over a smooth, compact Riemannian manifold with boundary and degenerates on a submanifold of codimension $1$. The domain of the operator is not known a priori; however, we show how a regularity condition on the eigenfunctions enforces a boundary condition on the degenerate submanifold and present asymptotic properties of the eigenfunctions and eigenvalues. 

\quad Let $(M,g)$ be a $d$-dimensional, smooth, compact Riemannian manifold with smooth boundary, isometrically embedded in $\mathbb{R}^{d_1}$. To avoid error from sample variance, we take our high-dimensional data set $\{x_j\}_{j=1}^n$ along a uniform grid in $M$. Let $\epsilon$ be the algorithmic parameter that determines the neighborhood size about each $x_j$, as introduced in Section \ref{sec:LLEmatrix}. Smaller values of $\epsilon$ generally improve the local parametrization of each $x_j$ in LLE, but only if the number of data points $n$ is sufficiently large. To ensure that this happens, we hereafter take $\epsilon=\epsilon(n)$ to be supercritical, meaning 
\begin{equation}\label{eq:supercrit}
    \lim_{n\to\infty}\epsilon(n)=0  \quad \textrm{and} \quad 
    \lim_{n\to\infty} n\epsilon^{d_1}(n)=\infty.
\end{equation}
\pagebreak 

\quad In \cite{WW}, Wu and Wu found that, upon the regularization discussed in Section \ref{sec:LLEmatrix}, the LLE matrix $W\in\mathbb{R}^{n\times n}$ behaves like a second order, mixed-type differential operator as $n\to\infty$. More precisely, they proved that there exists a differential operator $D_\epsilon$ such that for $k\in\{1,\dots,n\}$,
\begin{equation}\label{eq:Pconv}
    \sum_{j=1}^n (I-W)_{kj} f(x_j)=\epsilon^2 D_\epsilon f(x_k)+O(\epsilon^3)
\end{equation}
with probability greater than $1-n^{-2}$. Note that we have deviated slightly from the presentation in \cite[Thm.$16$]{WW} via a global sign change and omission of the variance error.  The constant in the error term of (\ref{eq:Pconv}) depends on the $C^2$ norm of $f$ and properties of the manifold $M$. For this reason, we are interested in understanding eigenpairs $(\lambda, u)\in \mathbb{C}\times C^2(M)$ satisfying
\begin{equation}\label{eq:PDE}
    D_\epsilon u=\lambda u \quad \textrm{on} \quad M
\end{equation}
for small $\epsilon$. However, it is not immediately clear whether this objective is well-defined. One curious aspect of (\ref{eq:Pconv}) is the lack of apparent boundary conditions, which play no role in the finite-dimensional case. Motivated by \cite{EM13, LZ11}, we demonstrate how boundary conditions are naturally encoded in the function space $C^2(M)$ and that the eigenvalues can be defined via a min-max principle. 

\quad An additional challenge is posed by the fact that $D_\epsilon$ is mixed-type, changing from elliptic to hyperbolic in the boundary layer
\begin{equation}\label{eq:bdrylayer}
    M_\epsilon\eqdef \{x\in M: \textrm{dist}_g(x,\p M)<\epsilon\}
\end{equation}
as depicted in Figure \ref{fig:M}. This is remedied by studying the solution locally and patching local solutions to form a global one. For a given manifold $M$, we set
\begin{gather}
\begin{aligned}\label{eq:OmegaPM}
    M_+&=\{x\in M: D_\epsilon \textrm{ is elliptic at }x\} \\ M_-&=\{x\in M: D_\epsilon \textrm{ is hyperbolic at }x\}
\end{aligned}
\end{gather}
with $\Gamma$ denoting the boundary between the two regions, where $D_\epsilon$ degenerates. In particular, $\Gamma=\p M_+$ is diffeomorphic to the boundary $\p M$, and the hyperbolic region $M_-$ is a subset of $M_\epsilon$.

\begin{figure}[H]
    \centering
    \includegraphics[scale=0.25]{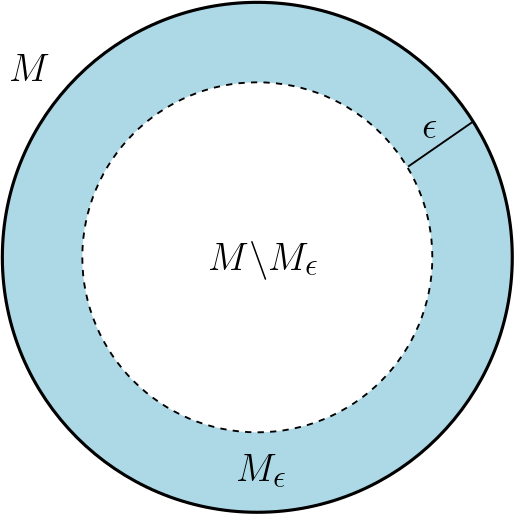}
    \hspace{28pt}
    \includegraphics[scale=0.25]{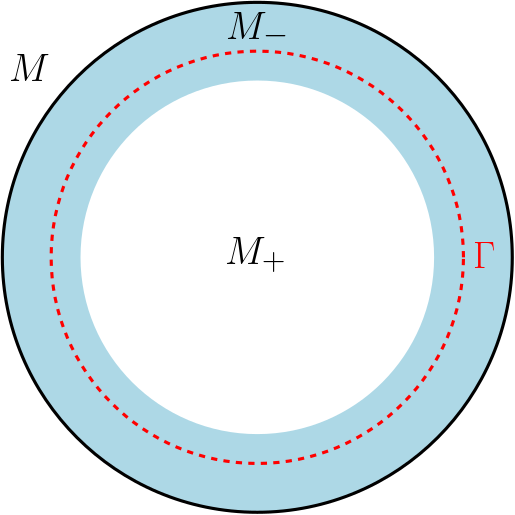}
    \caption{When $M$ is the disc, the left figure depicts the boundary layer $M_\epsilon$, and the right figure depicts the elliptic and hyperbolic regions $M_+,M_-$ separated by $\Gamma$.}
    \label{fig:M}
\end{figure}
\vspace{-10pt}

\quad A construction for $D_\epsilon$ is presented in \cite[Def. $12$]{WW}. However, because (\ref{eq:Pconv}) is pointwise, this construction relies on oriented normal coordinates at each point. For this reason, we analyze the differential operator on separable domains with a smooth boundary, where $D_\epsilon$ can be defined globally. On the interval $[0,1]$, (\ref{eq:PDE}) takes the Sturm-Liouville form
\begin{equation}\label{eq:1}
    -\left(p_1(x) u'(x)\right)'=\lambda w_1(x) u(x) \quad \textrm{for} \quad x\in(0,1),
\end{equation}
where exact formulas for $p_1,w_1$ are presented in Section \ref{sec:1}. The principal coefficient $p_1$ is continuous and sign-changing and hence defines the elliptic and hyperbolic regions in (\ref{eq:OmegaPM}). Meanwhile, the weight $w_1$ is continuous and positive over the interval. On the disc $\overline{B_1(0)}$ with polar coordinates, (\ref{eq:PDE}) takes the form
\begin{equation}\label{eq:2}
    -\frac{\p}{\p r}\left(p_2(1-r)\frac{\p u}{\p r}(r,\theta)\right)+q_2(1-r)\frac{\p^2u}{\p\theta^2}(r,\theta)=\lambda w_2(1-r)u(r,\theta),
\end{equation}
where exact formulas for $p_2,q_2,w_2$ are presented in Section \ref{sec:2}. As on the interval, the principal coefficient $p_2$ is continuous and sign-changing and defines (\ref{eq:OmegaPM}), while the weight $w_2$ is continuous and positive over the disc. In particular, $D_\epsilon$ degenerates where the principal coefficient vanishes. 

\quad For a general manifold, $D_\epsilon$ simplifies to the Laplacian outside the boundary layer, reducing (\ref{eq:PDE}) to
\begin{equation}
    -\frac{1}{2(d+2)}\Delta u=\lambda u \quad \textrm{on} \quad M\backslash M_\epsilon \nonumber
\end{equation}
where $d$ is the dimension of $M$. This is consistent with the LLE convergence result of \cite{WW18}, when $M$ is a compact manifold without boundary. Because $M_\epsilon$ vanishes as $\epsilon\to 0$, eigenfunctions of $D_\epsilon$ behave asymptotically like eigenfunctions of the Laplacian. Using the explicit construction of $D_\epsilon$ on the interval and disc, we determine a consistent boundary condition for the eigenfunctions of (\ref{eq:PDE}).

\quad Equations (\ref{eq:1}) and (\ref{eq:2}) introduce the relevant notion of a quasi-derivative, as presented in \cite{ZSZ14}. Absent any regularity requirement, there may be solutions to (\ref{eq:2}) for which the quasi-derivative $p_2(1-r) \frac{\p u}{\p r}(r,\theta)$ is differentiable but the derivative $\frac{\p u}{\p r}(r,\theta)$ is not. By construction, the partial derivative $\frac{\p}{\p r}$ serves as the normal derivative on $\Gamma$, so we interpret quasi-Neumann boundary conditions on $M_+$ as being satisfied when  $$p_2(1-r)\frac{\p u}{\p r} (r,\theta)=0 \quad \textrm{on} \quad \Gamma.$$ 

Using (\ref{eq:1}) and (\ref{eq:2}), we present eigenvalue and eigenfunction properties that arise in (\ref{eq:PDE}) under the condition $u\in C^2(M)$. We then present a natural extension of this framework to other compact manifolds with boundary.

\subsection{Eigenfunction properties on the interval and disc.}\label{sec:REsults} In this section, we present results regarding the limiting eigenvalue sequence of $D_\epsilon$ and discuss the natural boundary condition that corresponding eigenfunctions satisfy. We then present a variational interpretation of the eigenvalues that is consistent on both the interval and disc. The first result holds
when $D_\epsilon$ is defined on the unit interval and is proved in Section \ref{sec:pT1}. 

\begin{theorem}\label{thm:1d}
    Let $M$ be the unit interval $[0,1]$ and $M_+=\{x: p_1(x)>0\}$ with $p_1,w_1$ given in (\ref{eq:1}). There exists a constant $\epsilon_0>0$ such that for all $0<\epsilon\leq \epsilon_0$, the set of $C^2(M)$ solutions to (\ref{eq:PDE}) coincides with eigenfunctions of $D_\epsilon$ satisfying that for every $y\in\p M_+$,
    \begin{equation}\label{eq:quasiInt}
        \lim_{x\to y} p_1(x)u'(x)=0. 
    \end{equation}
    If (\ref{eq:quasiInt}) holds,
    \begin{enumerate}
        \item $D_\epsilon$ is self-adjoint on $L^2(M_+,w_1)$.
        \item The spectrum is discrete with $\sigma(D_\epsilon)\subset [0,\infty)$.
        \item Constant functions form the kernel of $D_\epsilon$. 
    \end{enumerate}
\end{theorem}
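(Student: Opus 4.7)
I will proceed in two stages: first use a local Frobenius analysis at the interior zeros of $p_1$ to identify the $C^2(M)$ solutions of (\ref{eq:PDE}) with the eigenfunctions of $D_\epsilon$ satisfying (\ref{eq:quasiInt}); then derive assertions (i)--(iii) from classical singular Sturm--Liouville theory on $L^2(M_+,w_1)$.

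\emph{Stage 1 (equivalence).} The explicit formula for $p_1$ from Section \ref{sec:1} lets me fix $\epsilon_0>0$ so that, for $0<\epsilon\leq\epsilon_0$, $p_1$ has exactly two simple zeros $y_-<y_+$ in $(0,1)$ with $p_1'(y_\pm)\neq 0$ and $M_+=(y_-,y_+)$. At each $y\in\partial M_+$, writing $t=x-y$ and expanding $p_1(x)=p_1'(y)t+O(t^2)$, $w_1(x)=w_1(y)+O(t)$, the ODE (\ref{eq:1}) has a regular singular point whose indicial equation has $s=0$ as a double root. The Frobenius method then produces two linearly independent local solutions
\[
u_{\mathrm{reg}}(t)=\sum_{k\geq 0}a_k t^k,\qquad u_{\mathrm{sing}}(t)=u_{\mathrm{reg}}(t)\log|t|+\sum_{k\geq 1}b_k t^k,
\]
with $u_{\mathrm{reg}}$ analytic and $a_0\neq 0$. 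Any local solution decomposes on each side of $y$ as $c_\pm u_{\mathrm{reg}}+d_\pm u_{\mathrm{sing}}$. Because $u_{\mathrm{sing}}$ blows up logarithmically, $C^2(M)$ regularity forces $d_\pm=0$. Directly differentiating, $p_1(x)u_{\mathrm{reg}}'(x)=O(t)\to 0$ while $u_{\mathrm{sing}}'(t)\sim a_0/t$ gives $p_1(x)u_{\mathrm{sing}}'(x)\to p_1'(y)a_0\neq 0$, so (\ref{eq:quasiInt}) is equivalent to $d_\pm=0$. Hence both conditions single out the same one-parameter family $\mathbb{R}\cdot u_{\mathrm{reg}}$ at each turning point, proving the claimed coincidence. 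Any such solution restricted to $M_+$ extends uniquely across $\partial M_+$ into $M_-$ by standard ODE theory, since (\ref{eq:1}) is a regular equation on the interior of $M_-$ and $p_1$ does not vanish on $\overline{M_-}\setminus\partial M_+$.

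\emph{Stage 2 (spectral properties).} I equip $D_\epsilon$ on $L^2(M_+,w_1)$ with the domain of functions $u$ for which $u$ and $p_1 u'$ are locally absolutely continuous, $(p_1 u')'/w_1\in L^2(M_+,w_1)$, and (\ref{eq:quasiInt}) holds at both endpoints. Integration by parts gives
\[
\langle D_\epsilon u,v\rangle_{L^2(M_+,w_1)}=\int_{M_+}p_1\,u'\,\overline{v'}\,dx-\bigl[p_1 u'\,\overline v\bigr]_{y_-}^{y_+},
\]
and the boundary term vanishes by (\ref{eq:quasiInt}); symmetry follows. Both endpoints are in the Weyl limit-circle case because both Frobenius solutions are locally $w_1$-square-integrable ($w_1$ is bounded and $\int_0^\delta(\log t)^2\,dt<\infty$), so a single boundary condition at each endpoint selects a self-adjoint extension and (\ref{eq:quasiInt}) is one such choice, yielding (i). For (ii), the associated quadratic form $\mathfrak{q}[u]=\int_{M_+}p_1|u'|^2\,dx$ is non-negative because $p_1>0$ on $M_+$, so $\sigma(D_\epsilon)\subset[0,\infty)$; discreteness follows from compactness of the form-domain embedding into $L^2(M_+,w_1)$, which is standard since $M_+$ is bounded and $w_1$ is bounded away from $0$ and $\infty$. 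For (iii), constants trivially satisfy (\ref{eq:quasiInt}) and lie in $\ker D_\epsilon$, and conversely $D_\epsilon u=0$ forces $\mathfrak{q}[u]=0$, hence $u'\equiv 0$ on the connected set $M_+$.

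\emph{Main obstacle.} The technical crux is the Frobenius analysis of Stage 1: controlling enough of the coefficient expansions of $p_1$ and $w_1$ to verify rigorously that the logarithmic solution violates (\ref{eq:quasiInt}) on both sides of each turning point, and justifying that $\epsilon\leq\epsilon_0$ indeed guarantees the simple-zero structure of $p_1$ that makes the indicial equation behave as claimed. Once this equivalence is secured, assertions (i)--(iii) are applications of standard Sturm--Liouville tools.
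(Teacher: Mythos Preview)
Your overall architecture --- Frobenius analysis at the interior turning points, followed by singular Sturm--Liouville theory on $M_+$ --- matches the paper's. The gap is in the local coefficient expansions that drive Stage~1, and it is not cosmetic.

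You assume $p_1$ has \emph{simple} zeros with $p_1'(y_\pm)\neq 0$ and that $w_1$ is bounded near $\partial M_+$. Neither holds. From the explicit integrating-factor construction in Section~\ref{sec:1} (see Proposition~\ref{prop:SLproperties}), the principal coefficient satisfies
\[
p_1(x)\sim C\,\sgn(x-x_0)\,|x-x_0|^{(4+2\sqrt{3})\epsilon}\quad\text{as }x\to x_0,
\]
so $p_1'(x_0)$ is infinite, not finite and nonzero; and $w_1=-p_1/\phi_2$ blows up like $|x-x_0|^{(4+2\sqrt{3})\epsilon-1}$. With the correct asymptotics the indicial equation at $x_0$ becomes $s(s-1+\gamma)=0$ with $\gamma=(4+2\sqrt{3})\epsilon$, giving \emph{distinct} roots $0$ and $1-\gamma$ that differ by a non-integer. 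There is therefore no logarithmic solution at all: the two Frobenius solutions are the analytic $u_{\rm Neu}$ (root $0$) and $u_{\rm Dir}\sim (x-x_0)^{1-\gamma}$ (root $1-\gamma$), which is continuous but whose first derivative diverges like $|x-x_0|^{-\gamma}$. It is this mild power-law singularity of $u_{\rm Dir}'$, not a $\log$ blow-up of the solution itself, that is excluded by $C^2(M)$ and that makes $p_1 u_{\rm Dir}'$ tend to a nonzero limit.

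The same misreading of $w_1$ undermines your Stage~2 justifications: the limit-circle claim and the compactness argument both invoke ``$w_1$ bounded,'' which is false. The conclusions survive (both Frobenius solutions are in $L^2(M_+,w_1)$ and the limit-circle case does hold), but you will need to redo those checks against the actual weight $w_1\sim |x-x_0|^{\gamma-1}$.
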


\quad This result states that the regularity requirement forces a Neumann-type boundary condition on a subset of $[0,1]$ that converges to the whole interval as $\epsilon$ vanishes. Further, the eigenvalue properties match those expected of the matrix $I-W$ when the sampled data set in LLE is sufficiently large, as discussed in Section \ref{sec:LLEmatrix}. For a precise estimate on the eigenvalues of $D_\epsilon$ on the interval, see Corollary \ref{prop:Est1}, and for a numerical demonstration of spectral convergence on the interval, see Section \ref{sec:3}. The second result holds when $D_\epsilon$ is defined on the unit disc and is proved in Section \ref{sec:pT2}. 

\begin{theorem}\label{thm:2d}
    Let $M$ be the unit disc $\overline{B_1(0)}$ and $M_+=\{(r,\theta): p_2(1-r)>0\}$ with $p_2,w_2$ given in (\ref{eq:2}). There exists a constant $\epsilon_0>0$ such that for all $0<\epsilon\leq \epsilon_0$, the set of $C^2(M)$ solutions to (\ref{eq:PDE}) coincides with eigenfunctions of $D_\epsilon$ satisfying that for every $y\in \p M_+$,
    \begin{equation}\label{eq:quasiDisc}
        \lim_{(r,\theta)\to y} p_2(1-r)\frac{\p u}{\p r}(r,\theta)=0.
    \end{equation}
    If (\ref{eq:quasiDisc}) holds,
    \begin{enumerate}
        \item $D_\epsilon$ is self-adjoint on $L^2(M_+,w_2)$.
        \item The spectrum is discrete with $\sigma(D_\epsilon)\subset [0,\infty)$.
        \item Constant functions form the kernel of $D_\epsilon$. 
    \end{enumerate}
\end{theorem}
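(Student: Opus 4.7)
My strategy is to reduce the disc problem to a one-parameter family of one-dimensional Sturm--Liouville problems via Fourier decomposition in the angular variable, then apply modal analysis analogous to Theorem~\ref{thm:1d} and reassemble. Writing $u(r,\theta)=\sum_{n\in\bbZ}R_n(r)e^{in\theta}$ and substituting into (\ref{eq:2}), each Fourier coefficient $R_n$ satisfies
\begin{equation*}
    -\bigl(p_2(1-r)R_n'(r)\bigr)' - n^2 q_2(1-r)R_n(r)=\lambda\, w_2(1-r)R_n(r)
\end{equation*}
on $[0,1]$, which differs from (\ref{eq:1}) only by a bounded zeroth-order perturbation. Choosing $\epsilon_0$ small enough guarantees that $p_2(1-r)$ has a unique simple zero $r_\ast=r_\ast(\epsilon)$, making $\Gamma=\{r=r_\ast\}$ a circle and $M_+=\{r<r_\ast\}$ a disc.

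To establish the boundary condition (\ref{eq:quasiDisc}), I would apply the Frobenius method at the regular singular point $r_\ast$ of each modal ODE. The indicial equation admits two roots, one yielding a branch analytic up to $r_\ast$ and the other a branch with a logarithmic (or otherwise non-$C^2$) singularity. The smooth branch is precisely the one for which $p_2(1-r)R_n'(r)\to 0$ as $r\to r_\ast$, so the requirement $u\in C^2(M)$ forces this vanishing mode-by-mode; uniform convergence of the Fourier series and its radial derivative on compact subsets of $\overline{M_+}$ then yields (\ref{eq:quasiDisc}) pointwise on $\Gamma$. For the reverse inclusion, interior elliptic regularity on $M_+$ together with the Frobenius series representation shows that any solution satisfying (\ref{eq:quasiDisc}) extends to a $C^2(M)$ function; outside the boundary layer $D_\epsilon$ reduces to a constant multiple of the Laplacian, ensuring smoothness there.

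For assertions~(1)--(3), I would set $\cH=L^2(M_+, w_2\,dA)$ and associate to $D_\epsilon$ the sesquilinear form
\begin{equation*}
    Q(u,v)=\int_{M_+}\bigl(p_2(1-r)\,u_r\,\overline{v_r}-q_2(1-r)\,u_\theta\,\overline{v_\theta}\bigr)\,dr\,d\theta.
\end{equation*}
Integration by parts together with (\ref{eq:quasiDisc}) gives $Q(u,v)=\langle D_\epsilon u,v\rangle_{\cH}$ on the eigenfunction domain, so $D_\epsilon$ is symmetric. Since $p_2,-q_2\geq 0$ on $M_+$ by the ellipticity defining $M_+$, the form $Q$ is nonnegative and closable, and its Friedrichs extension supplies the self-adjoint realization in (1); the spectrum then lies in $[0,\infty)$. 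Discreteness in (2) follows from a weighted Rellich-type compact embedding of the form domain into $\cH$, exploiting angular compactness and the fact that $w_2$ is bounded below. Constants lie in the kernel, and the variational principle combined with the connectedness of $M_+$ shows they span it, giving (3).

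The main obstacle I anticipate is controlling the Frobenius analysis uniformly in the Fourier mode $n$: as $n\to\infty$, the term $n^2 q_2(1-r)$ can shift the indicial exponents and dominate the behavior near $r_\ast$, so one must verify that (\ref{eq:quasiDisc}) continues to characterize the smooth branch uniformly in $n$ and that the pointwise vanishing on $\Gamma$ survives summation of the series. A secondary technical point is closedness of $Q$: because $p_2$ and $-q_2$ vanish simultaneously at $\Gamma$, the natural form domain is a weighted Sobolev space with a degenerate norm, and establishing the compact embedding into $\cH$ will likely require a Hardy-type inequality adapted to the simple zero of $p_2$ at $r_\ast$.
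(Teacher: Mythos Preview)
Your approach is essentially the paper's: separate variables, apply the Frobenius method at the interior singularity of the radial ODE, identify the smooth branch with the quasi-Neumann condition, and read off (i)--(iii) from the Rayleigh quotient. The paper works with a single angular mode $u(r,\theta)=u^r(r)\cos(\nu\theta)$ rather than a full Fourier series, but the content is the same.

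Two corrections to your write-up. First, the claim that the modal equation ``differs from (\ref{eq:1}) only by a bounded zeroth-order perturbation'' is false: by Proposition~\ref{prop:SL2properties}, $q_2(1-r)\to-\infty$ at $\Gamma$. This does not matter, however, because the right place to do the Frobenius analysis is the undivided form (\ref{eq:rad}), where the zeroth-order coefficient $-\tfrac{\nu^2}{r^2}\phi_{11}(1-r)$ is analytic at $1-r_0$. Second, and relatedly, your main anticipated obstacle does not arise: since $\phi_{22}$ has a \emph{simple} zero at $r_0$, the normalized zeroth-order term acquires only a simple pole there, hence contributes nothing to the indicial polynomial. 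The paper computes the indicial roots as $0$ and $1-\tfrac{b_0}{a_1}$ with $\tfrac{b_0}{a_1}=O(\epsilon)$, independent of both $\nu$ and $\lambda$; so the dichotomy between the $C^\infty$ Neumann branch and the merely continuous Dirichlet branch is uniform in the mode, and your worry about $n^2q_2$ shifting the exponents is unfounded. The paper also sidesteps your summation concern entirely by observing that the radial symmetry of the coefficients makes eigenfunctions separable, so one never needs to control an infinite Fourier sum.
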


\quad Similar to Theorem \ref{thm:1d}, this result states that the same regularity requirement forces a Neumann-type boundary condition on a subset of $\overline{B_1(0)}$ that converges to the whole disc as $\epsilon$ vanishes. The eigenvalues mirror the limiting properties of $I-W$ for large data sets. For a numerical demonstration of spectral convergence on the disc, see Section \ref{sec:3}. The following statement presents the variational framework for eigenvalues of $D_\epsilon$ on the interval (\ref{eq:1}) and is proved in Section \ref{sec:1}.

\begin{theorem}\label{thm:Energy}
    Let $M=[0,1]$ and set $$V=\left\{v\in L^2\left(M_+, w_1\right): \int_{M_+} p_1(x)\left|v'(x)\right|^2 dx<\infty\right\}.$$ If the eigenfunctions of $D_\epsilon$ on $M$ satisfy (\ref{eq:quasiInt}), then the eigenvalues $\{\lambda_j\}_j$ satisfy
    \begin{equation}
        \lambda_j=\max_{\substack{V_j\subseteq L^2(M_+,w_1), \\ |V_j|=j-1}}\left\{ \min_{v\in V\cap V_j^\perp\backslash\{0\}}\frac{\displaystyle \int_{M_+} p_1(x)\left|v'(x)\right|^2 dx}{\displaystyle \int_{M_+} w_1(x) \left|v(x)\right|^2 dx}\right\} \nonumber
    \end{equation}
    for all $j\geq 1$.
\end{theorem}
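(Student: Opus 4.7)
The plan is to recognize the claimed identity as the Courant--Fischer min-max characterization of the eigenvalues of a non-negative self-adjoint operator with compact resolvent. By Theorem \ref{thm:1d}, under condition (\ref{eq:quasiInt}) the operator $D_\epsilon$ is self-adjoint on $L^2(M_+, w_1)$ with discrete spectrum contained in $[0, \infty)$, so it suffices to identify the quadratic form attached to $D_\epsilon$ and verify that its form domain is exactly the space $V$.

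First I would establish the integration-by-parts identity
\[
\langle D_\epsilon u, v \rangle_{L^2(M_+, w_1)} \;=\; \int_{M_+} p_1(x)\, u'(x)\, \overline{v'(x)}\, dx
\]
for $u \in \dom(D_\epsilon)$ and $v$ in a dense subclass of $V$. Writing $M_+$ as a union of open subintervals of $[0,1]$ with endpoints in $\partial M_+$ and integrating by parts on each component produces a boundary contribution of the form $\lim_{x\to y} p_1(x)u'(x)\overline{v(x)}$ at each $y \in \partial M_+$, which vanishes by (\ref{eq:quasiInt}) together with boundedness of $v$ near $\partial M_+$. This identifies $Q(u,v) \eqdef \int_{M_+} p_1 u' \overline{v'}\,dx$ as the quadratic form associated with $D_\epsilon$.

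Next I would verify that $Q$ is closed, symmetric and non-negative on $V$, and that $V$ is precisely the form domain of $D_\epsilon$. Non-negativity follows from $p_1 > 0$ on $M_+$; symmetry is immediate; closedness follows from continuity of $p_1$ and strict positivity of $w_1$ on $\overline{M_+}$ (so that the graph norm is equivalent to the norm defining $V$). The inclusion $\dom(D_\epsilon) \subset V$ follows from the integration-by-parts identity with $v = u$. For the reverse identification, I would appeal to the Frobenius-type local analysis from earlier sections to show that the quasi-Neumann condition (\ref{eq:quasiInt}) is the variationally natural boundary condition at the degenerate endpoints, i.e., the one selected by the Friedrichs extension, so that no further restriction on $V$ appears.

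With $D_\epsilon$ identified as the Friedrichs extension of the closed, densely defined, non-negative form $(Q, V)$, discreteness of the spectrum from Theorem \ref{thm:1d}(ii) implies that $D_\epsilon$ has compact resolvent, and the standard Courant--Fischer min-max theorem for such operators yields precisely the stated formula. The main obstacle is the identification step in the previous paragraph: because $p_1$ vanishes on $\partial M_+$, the standard non-degenerate Sturm--Liouville theory does not directly apply, and one must use the Frobenius expansion to classify the possible boundary behavior of functions in $V$ and to rule out self-adjoint extensions whose form domain would strictly enlarge $V$.
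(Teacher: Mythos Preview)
Your proposal is correct and follows essentially the same route as the paper: identify the Rayleigh quotient $E(f)=\int_{M_+}p_1|f'|^2\big/\int_{M_+}w_1|f|^2$ as the quadratic form attached to $D_\epsilon$, observe that its minimizers over orthogonal subspaces are eigenfunctions satisfying the quasi-Neumann condition, and then invoke the standard min-max principle. The paper's argument is terser---it simply cites \cite[Ch.~6]{CH} for the fact that minimizers of $E$ on orthogonal subspaces of $L^2(M_+,w_1)$ satisfy $\lim_{x\to x_0}p(x)f'(x)=0$---whereas you spell out the Friedrichs-extension and form-domain identification more carefully; but the underlying strategy is the same.
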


\quad This result states that the eigenvalues of $D_\epsilon$ can be determined via a min-max principle, in which an energy functional is varied over an appropriate function space. This method necessarily imposes quasi-Neumann boundary conditions on the minimizers and parallels the analogous finite-dimensional method for the matrix $I-W$. The next result holds when $D_\epsilon$ is defined on the unit disc and is proved in Section \ref{sec:2}.

\begin{theorem}\label{thm:Energy2}
    Let $M=\overline{B_1(0)}$ and set $$V=\left\{v\in L^2\left(M_+, w_2\right): \int_{M_+} p_2(1-r)\left|\frac{\p v}{\p r}(r,\theta)\right|^2 drd\theta+\int_{M_+}q_2(1-r)\frac{\p^2v}{\p\theta^2}(r,\theta) \overline{v(r,\theta)}drd\theta<\infty\right\}.$$  If eigenfunctions of $D_\epsilon$ on $M$ satisfy (\ref{eq:quasiDisc}), then the eigenvalues $\{\lambda_j\}_j$ satisfy
    \begin{equation}
        \lambda_j=\max_{\substack{V_j\subseteq L^2(M_+,w_2), \\ \left|V_j\right|=j-1}}\left\{ \min_{v\in V\cap V_j^\perp\backslash\{0\}}\frac{\displaystyle \int_{M_+} p_2(1-r)\left|\frac{\p v}{\p r}(r,\theta)\right|^2 drd\theta+\int_{M_+}q_2(1-r)\frac{\p^2v}{\p\theta^2}(r,\theta) \overline{v(r,\theta)}drd\theta}{\displaystyle \int_{M_+} w_2(1-r) \left|v(r,\theta)\right|^2 drd\theta}\right\} \nonumber
    \end{equation}
    for all $j\geq 1$.
\end{theorem}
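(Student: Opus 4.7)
My approach would be to reduce the statement to the Courant-Fischer min-max principle for the self-adjoint operator $D_\epsilon$, after identifying its associated quadratic form with the numerator appearing in the claim. I would begin by introducing the sesquilinear form
\begin{equation*}
    a(u,v) \eqdef \int_{M_+} p_2(1-r)\, \frac{\p u}{\p r}\, \overline{\frac{\p v}{\p r}}\, dr\,d\theta + \int_{M_+} q_2(1-r)\, \frac{\p^2 u}{\p \theta^2}\, \overline{v}\, dr\,d\theta
\end{equation*}
on the space $V$ of the statement. Using $2\pi$-periodicity in $\theta$ together with the $\theta$-independence of $q_2$, a $\theta$-integration by parts rewrites the second summand as $-\int_{M_+} q_2\, \p_\theta u\, \overline{\p_\theta v}$, making it manifest that $a$ is Hermitian. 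Next, for a $C^2$ eigenfunction $u$ of $D_\epsilon$ with eigenvalue $\lambda$ and for $v \in V$, I would multiply (\ref{eq:2}) by $\overline{v}$, integrate over $M_+$, and integrate by parts in the radial variable. The boundary contribution on $\Gamma = \p M_+$ is proportional to $p_2(1-r)\, \p_r u$ evaluated on $\Gamma$, which vanishes by (\ref{eq:quasiDisc}). This would yield
\begin{equation*}
    a(u,v) = \lambda \int_{M_+} w_2(1-r)\, u\, \overline{v}\, dr\,d\theta,
\end{equation*}
and setting $v = u$ identifies $\lambda$ with the Rayleigh quotient of $u$.

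Theorem \ref{thm:2d} then supplies the structural ingredients needed: under the quasi-Neumann condition, $D_\epsilon$ is non-negative, self-adjoint on $L^2(M_+, w_2)$, and has purely discrete spectrum. The eigenfunctions $\{u_k\}_{k\geq 1}$ can accordingly be chosen to form a complete orthonormal basis of $L^2(M_+, w_2)$, and the identity above specializes to $a(u_k, u_l) = \lambda_k \delta_{kl}$.

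With these pieces in place, the Courant-Fischer argument is standard. For $V_j = \spn(u_1,\dots,u_{j-1})$, any $v \in V \cap V_j^\perp$ admits the expansion $v = \sum_{k\geq j} c_k u_k$, and the Rayleigh quotient equals $\sum_k \lambda_k |c_k|^2 / \sum_k |c_k|^2 \geq \lambda_j$, with equality at $v = u_j$; this gives a lower bound on the outer maximum. For an arbitrary $(j-1)$-dimensional $V_j \subseteq L^2(M_+, w_2)$, a dimension count produces a nonzero $v \in \spn(u_1,\dots,u_j) \cap V_j^\perp$ whose Rayleigh quotient is at most $\lambda_j$, furnishing the matching upper bound.

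The hard part will be justifying the expansion $v = \sum_k c_k u_k$ in a topology strong enough to pass to the limit in $a(v,v)$ — that is, showing that $a$ is closed on $V$ and that the eigenfunctions are dense in $V$ with respect to the form norm. This amounts to checking that $V$ coincides with the form domain of $D_\epsilon$, which follows from compactness of the resolvent together with the $L^2(M_+, w_2)$-completeness of the eigenfunctions supplied by Theorem \ref{thm:2d}.
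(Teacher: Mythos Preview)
Your proposal is correct at the level of rigor the paper operates at, but it proceeds differently from the paper's own argument. The paper exploits the radial symmetry of the coefficients in (\ref{eq:DeDef2}) to separate variables, writing each eigenfunction as $u^r(r)\cos(\nu\theta)$ and reducing the two-dimensional Rayleigh quotient to the one-dimensional radial functional (\ref{eq:E2}) indexed by the angular frequency $\nu$. It then cites the classical Sturm--Liouville min-max theory from Courant--Hilbert for this radial problem, and observes that orthogonality in $L^2(M_+,w_2)$ follows because distinct angular modes are already orthogonal. Your route, by contrast, never separates variables: you work directly with the full sesquilinear form $a(\cdot,\cdot)$ on $M_+$, invoke Theorem~\ref{thm:2d} to obtain a complete orthonormal eigenbasis of $L^2(M_+,w_2)$, and run the abstract Courant--Fischer argument. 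The paper's approach is more concrete and sidesteps the form-domain question you flag at the end (since the one-dimensional Sturm--Liouville theory it cites handles that internally), while your approach is cleaner in that it does not rely on separability and would transfer more readily to the general-manifold setting of Theorem~\ref{thm:blah}. One small omission in your integration-by-parts step: the radial boundary term appears at both $r=1-r_0$ and $r=0$; the latter also vanishes because $p_2(1)=0$ by Proposition~\ref{prop:SL2properties}, but you should say so.
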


\quad Analogous to Theorem \ref{thm:Energy}, this statement provides the variational framework for interpreting eigenvalues on the disc. Motivated by Theorems \ref{thm:Energy} and \ref{thm:Energy2}, we consider the following weak construction for $D_\epsilon$ when $M$ is a general compact, $d$-dimensional Riemannian manifold with smooth boundary.

\begin{define}\label{ass:form}
    We say $D_\epsilon$ is admissible on $M$ if there exist nonnegative functions $p,w$ on $M_+$ and a symmetric, nonnegative operator $A_\epsilon$ on $L^2(M_+\cap M_\epsilon,w)$ such that
        \begin{gather}\label{eq:Buv}
        \begin{aligned}
            \langle D_\epsilon u, v\rangle_{L^2(M_+,w)}=\frac{1}{2(d+2)}&\int_{M\backslash M_\epsilon} \nabla u(x)\overline{\nabla v(x)} dv_g \\ +&\int_{M_+\cap M_\epsilon} p(x_d) \frac{\p u}{\p x_d}(x) \overline{\frac{\p v}{\p x_d}}(x) dx\\ +& \int_{M_+\cap M_\epsilon} w(x_d) A_\epsilon u(x) \overline{v(x)} dx 
        \end{aligned}
        \end{gather}
        where $\{x_1,\dots,x_d\}$ are Fermi coordinates \cite[Ch. $2$]{AGT} such that $x_d=0$ along the boundary $\p M$ and $x_d>0$ in the interior of $M$. In addition,
        \begin{equation}\label{eq:ctssss}
            \frac{1}{2(d+2)}\int_{\p M_\epsilon}\frac{\p u}{\p \nu}(x)\overline{v(x)}d\sigma+p(\epsilon)\int_{\p M_\epsilon} \frac{\p u}{\p x_d}(x',\epsilon)\overline{v(x',\epsilon)}dx'=0
        \end{equation}
        where $x=(x',x_d)$ in $M_\epsilon$, $\nu$ is the outward-pointing normal on $\p M_\epsilon$ and $d\sigma$ is the measure on the boundary.

\end{define}

\quad $D_\epsilon$ is necessarily admissible on both the interval and disc. The function $p$ is meant to capture the degenerate behavior of $D_\epsilon$ in the boundary layer, remaining positive on $M_+$ and negative on $M_-$. As in Conjecture \ref{conj:1} below, we expect that the spectrum of $D_\epsilon$ is completely described by the behavior of the differential operator on the elliptic region. Equation (\ref{eq:ctssss}) is meant to stitch together the piecewise construction for $D_\epsilon$, which behaves like the Laplacian on $M\backslash M_\epsilon$ but relies on Fermi coordinates in $M_\epsilon$.

\quad Under Definition \ref{ass:form}, a variational interpretation of the eigenvalues enforces a quasi-Neumann boundary condition on the eigenfunctions of $D_\epsilon$ when defined over a general compact manifold. This is presented more
precisely in the following statement, which is proved in Section \ref{sec:6}.

\begin{theorem}\label{thm:blah}
    Let $M$ be a smooth, compact Riemannian manifold with smooth boundary and set 
    \begin{equation}
        V=\left\{v\in L^2(M_+,w): \langle D_\epsilon v,v\rangle_{L^2(M_+,w)}<\infty\right\}. \nonumber
    \end{equation}
    Let $D_\epsilon$ be admissible on $M$. If the eigenvalues $\{\lambda_j\}_j$ of $D_\epsilon$ satisfy
    \begin{equation}
        \lambda_j=\max_{\substack{V_j\subseteq L^2(M_+,w), \\ |V_j|=j-1}}\left\{ \min_{v\in V\cap V_j^\perp\backslash\{0\}}\frac{\langle D_\epsilon v,v\rangle_{L^2(M_+,w)}}{\langle v,v\rangle_{L^2(M_+,w)}}\right\}, \nonumber
    \end{equation}
    for all $j\geq 1$, then the corresponding eigenfunctions satisfy that for every $y\in\p M_+$,
    \begin{equation}\label{eq:quasiM}
        \lim_{x\to y} p(x_d)\frac{\p u}{\p x_d}(x)=0. 
    \end{equation}
\end{theorem}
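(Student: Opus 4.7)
The strategy is the one that worked for Theorems \ref{thm:Energy} and \ref{thm:Energy2}, now carried out in the abstract setting afforded by Definition \ref{ass:form}. Because the eigenvalues $\lambda_j$ are realised by the min-max principle over $V$, a standard first-variation argument shows that each eigenfunction $u$ is a critical point of the Rayleigh quotient, so that
\begin{equation}
    \langle D_\epsilon u,v\rangle_{L^2(M_+,w)} = \lambda \langle u,v\rangle_{L^2(M_+,w)} \nonumber
\end{equation}
for every $v\in V$. The first step is therefore to record this weak identity and to note that we are free to test it against $v\in C^2(M)\cap V$ whose trace on $\Gamma=\p M_+$ can be prescribed (up to an arbitrary smooth function), by the density of such test fields in $V$.

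The second step expands the left-hand side via the admissibility formula (\ref{eq:Buv}) and integrates by parts region by region. On $M\backslash M_\epsilon$ the Green identity produces $-\frac{1}{2(d+2)}\Delta u$ in the interior together with a boundary integral on $\p M_\epsilon$ involving $\p_\nu u$; on $M_+\cap M_\epsilon$ the one-dimensional integration in the Fermi coordinate $x_d$ produces $-\p_{x_d}\bigl(p(x_d)\p_{x_d}u\bigr)$ in the interior together with boundary integrals at $x_d=\epsilon$ (the interface $\p M_\epsilon$) and at $\Gamma$. The symmetric operator $A_\epsilon$ contributes $A_\epsilon u$ in the interior and, being tangential, no normal boundary term at $\Gamma$.

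The third step is to cancel everything except the trace on $\Gamma$. The two interface contributions on $\p M_\epsilon$ are precisely what the compatibility condition (\ref{eq:ctssss}) is designed to eliminate, so they annihilate each other for every admissible $v$. The interior pieces assemble into a classical action of $D_\epsilon$ on $u$, which by the eigenvalue equation $D_\epsilon u=\lambda u$ is exactly cancelled by the right-hand side $\lambda\langle u,v\rangle$. What remains is
\begin{equation}
    \lim_{x_d\to y_d}\int_{\Gamma} p(x_d)\frac{\p u}{\p x_d}(x)\,\overline{v(x)}\,d\sigma_\Gamma = 0, \nonumber
\end{equation}
valid for every admissible test trace $v|_\Gamma$. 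A density argument for traces on $\Gamma$ then forces $\lim_{x\to y}p(x_d)\frac{\p u}{\p x_d}(x)=0$ for every $y\in\Gamma$, which is (\ref{eq:quasiM}).

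The main obstacle is justifying that the boundary term on $\Gamma$ is well-defined as a limit from the elliptic side, since $p$ vanishes at $\Gamma$ and the $C^2$ regularity of $u$ is only assumed on $M$, not uniformly up to the degenerate set. This is handled exactly as in the proofs of Theorems \ref{thm:1d} and \ref{thm:2d}: one approaches $\Gamma$ through level sets $\{x_d=\delta\}$ inside $M_+$, applies the above integration by parts on $M_+\cap\{x_d\geq\delta\}$, and takes $\delta\to 0$, noting that the finiteness of $\langle D_\epsilon u,u\rangle$ built into $V$ together with the admissibility structure provides enough control on $p(x_d)|\p_{x_d}u|^2$ near $\Gamma$ to extract the limit. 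Once this limit exists, its vanishing for arbitrary test traces gives the claimed quasi-Neumann condition.
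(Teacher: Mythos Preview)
Your proposal is correct and follows essentially the same route as the paper: first variation of the Rayleigh quotient, expansion via the admissibility formula (\ref{eq:Buv}), integration by parts on $M\backslash M_\epsilon$ and on $M_+\cap M_\epsilon$, cancellation of the $\p M_\epsilon$ interface terms via (\ref{eq:ctssss}), and finally the vanishing of the remaining $\Gamma$-boundary integral against arbitrary test traces. Your additional remarks on density of test traces and the limiting procedure through level sets $\{x_d=\delta\}$ are more explicit than what the paper records, but they do not constitute a different method.
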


\quad By construction, $-\frac{\p}{\p x_d}$ denotes the outward normal derivative along $\p M_+$, and $p$ is the principal coefficient that determines where $D_\epsilon$ is elliptic. Thus, $p$ vanishes along $\Gamma$, meaning that the quasi-Neumann boundary condition can be naturally interpreted as a statement regarding regularity. Namely, the conclusion of Theorem \ref{thm:blah} states that the normal derivative of $u$ does not blow up faster than $p$ vanishes.  

\quad In \cite{CL06}, Coifman and Lafon provided the leading order asymptotics for the Graph Laplacian $L\in\mathbb{R}^{n\times n}$ on manifolds with smooth boundary. In contrast to Wu and Wu's result for the LLE matrix \cite[Thm. $16$]{WW}, they found that $L$ behaved like a first order differential operator near the boundary, suggesting a Neumann boundary condition. This was later justified in \cite[Thm. $6.2$]{V20} when Vaughn, Berry, and Antil proved that the Graph Laplacian converges weakly to the Dirichlet form. Namely, $\sum_{k=1}^n\sum_{j=1}^n f(x_k) L_{kj}f(x_j)$ behaves like a multiple of $\int_{M}\left|\nabla f(x)\right|^2 dv_g$ for large $n$. We expect that, when sampled on a generic smooth, compact manifold with boundary, the matrix $I-W$ converges weakly to the form in (\ref{eq:Buv}). 

\begin{conj}\label{conj:1}
    Let $M$ be a Riemannian manifold with smooth boundary and suppose $\{x_j\}_{j=1}^n$ is sampled along a uniform grid in $M$. If $\epsilon=\epsilon(n)$ is supercritical, then $D_\epsilon$ is admissible on $M$ and the LLE matrix $W$ satisfies
    \begin{equation}
        \sum_{k=1}^n\sum_{j=1}^n  f(x_k)(I-W)_{kj}f(x_j)= \epsilon^2\frac{\langle D_\epsilon f,f\rangle_{L^2(M_+,w)}}{||f||^2_{L^2(M_+,w)}}+o(\epsilon^2). \nonumber
    \end{equation} 
\end{conj}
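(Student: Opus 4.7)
The plan is to establish the conjecture in two stages: first showing admissibility of $D_\epsilon$ on a general $M$, and then passing from the pointwise expansion (\ref{eq:Pconv}) to the claimed quadratic-form asymptotic. I would work throughout in Fermi coordinates $(x',x_d)$ on a tubular neighborhood of $\p M$, which already enter Definition \ref{ass:form} and are natural for localizing the pointwise construction of $D_\epsilon$ from \cite[Def.~12]{WW}.

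To prove admissibility, I would start from the explicit pointwise definition of $D_\epsilon$ in oriented normal coordinates and show that, modulo $O(\epsilon)$ terms, it agrees with an operator built directly in Fermi coordinates on $M_\epsilon$. In those coordinates $x_d$ plays the role that $1-r$ played in (\ref{eq:2}), so I would extract the normal principal coefficient $p(x_d)$ and the density $w(x_d)$ by the same Taylor expansion of the averaging kernel used in \cite{WW}, and isolate the tangential second-order part as $A_\epsilon$. Symmetry and nonnegativity of $A_\epsilon$ on $L^2(M_+\cap M_\epsilon, w)$ should follow from the fact that, after symmetrization by the sampling density, $I-W$ is a symmetric nonnegative operator on the grid. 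The matching identity (\ref{eq:ctssss}) at $x_d=\epsilon$ would arise from imposing that the Laplacian expression valid on $M\backslash M_\epsilon$ and the Fermi-coordinate expression valid on $M_\epsilon$ agree when tested against $v$; this is an explicit compatibility computation analogous to the trace matching already used in Theorems \ref{thm:1d} and \ref{thm:2d}.

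For the quadratic-form limit I would take the pointwise expansion (\ref{eq:Pconv}), multiply by $f(x_k)$, and sum over $k$. The left side becomes the bilinear form of $I-W$ evaluated on the grid data $\{f(x_j)\}$. On the right, the leading term $\epsilon^2\sum_k f(x_k) D_\epsilon f(x_k)$ is a Riemann sum, which by the uniform grid assumption converges to $\epsilon^2\int_M f(x) D_\epsilon f(x)\, dv_g$; invoking Definition \ref{ass:form} and integrating by parts then produces $\epsilon^2\langle D_\epsilon f,f\rangle_{L^2(M_+,w)}$ up to boundary contributions that cancel via (\ref{eq:ctssss}). The supercritical scaling $n\epsilon^{d_1}(n)\to\infty$ together with the pointwise $O(\epsilon^3)$ bound controls the accumulated error to $o(\epsilon^2)$, after dividing by $\|f\|^2_{L^2(M_+,w)}$ in the statement.

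The hardest part will be in the boundary layer, both in constructing $A_\epsilon$ globally rather than only in the oriented normal coordinates chosen at each sample point in \cite[Def.~12]{WW}, and in showing that the piecewise construction glues together consistently through (\ref{eq:ctssss}). A closely related difficulty is that within $M_\epsilon$ the Jacobian is of order $\epsilon^{d-1}$, so one loses a power of $\epsilon$ compared with the interior and must verify that the approximation error does not swamp the leading $\epsilon^2$ contribution. I expect this to be handled by rescaling $s=x_d/\epsilon$ in $M_\epsilon$, as was effectively done for the interval and disc in Sections \ref{sec:1} and \ref{sec:2}, where that scaling made the Sturm--Liouville degeneracy explicit and allowed the limit form to be computed.
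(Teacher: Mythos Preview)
The statement you are attempting to prove is labeled in the paper as Conjecture~\ref{conj:1}, not as a theorem or proposition. The paper does \emph{not} supply a proof; it presents the statement as an open problem, motivated by the analogy with the weak convergence of the Graph Laplacian in \cite{V20} and by the explicit verification of admissibility on the interval and disc. There is therefore no proof in the paper against which your proposal can be compared.

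What you have written is a reasonable research outline rather than a proof, and you yourself flag the main obstructions. A few concrete issues would need to be resolved before this becomes a proof. First, the passage from the pointwise expansion (\ref{eq:Pconv}) to a Riemann sum requires a normalization: the sum $\sum_k f(x_k) D_\epsilon f(x_k)$ does not converge to $\int_M f\, D_\epsilon f\, dv_g$ without a factor accounting for the grid spacing, and how that factor interacts with the $\epsilon^2$ scaling and the division by $\|f\|^2_{L^2(M_+,w)}$ in the conjectured identity is not addressed. Second, the error term in (\ref{eq:Pconv}) is $O(\epsilon^3)$ pointwise with a constant depending on the $C^2$ norm of $f$ and on the point $x_k$; summing $n$ such errors and showing the total is $o(\epsilon^2)$ is not automatic from supercriticality alone, since $n\epsilon^3$ need not tend to zero. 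Third, the construction of $A_\epsilon$ globally in Fermi coordinates is precisely the step that the paper avoids by restricting to separable domains; this is the heart of the conjecture and your sketch does not indicate how the coordinate dependence of \cite[Def.~12]{WW} would be removed.
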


\quad In combination, Theorem \ref{thm:blah} and Conjecture \ref{conj:1} provide a justification for the quasi-Neumann boundary condition (\ref{eq:quasiM}) arising on general manifolds.

\subsection{Outline.} The structure of the paper is as follows. In Section \ref{sec:LLEmatrix}, we detail the Locally Linear Embedding algorithm and the construction of the LLE matrix $W$. We highlight key properties that support the results of Theorem \ref{thm:1d} and \ref{thm:2d}. In Section \ref{sec:1}, we detail the operator $D_\epsilon$ on the interval and use its form to determine the regularity of local solutions in the boundary layer. Using the Frobenius method, we isolate a unique eigenfunction and prove Theorem \ref{thm:1d}. We then present a variational framework for determining the spectrum and provide eigenvalue estimates. In Section \ref{sec:2}, we present $D_\epsilon$ on the unit disc and use a similar argument to prove Theorem \ref{thm:2d}. In particular, the boundary condition enforced by taking eigenfunctions in $C^2(M)$ is consistent on both domains.

\quad In Section \ref{sec:3}, we corroborate Theorems \ref{thm:1d} and \ref{thm:2d} numerically, demonstrating eigenvalue convergence on both the interval and disc. More precisely, by constructing a piecewise approximation for the eigenfunctions of (\ref{eq:PDE}), we use the matching conditions presented in Propositions \ref{prop:DetEq} and \ref{prop:DetEq2} to estimate the eigenvalues. In Section \ref{sec:6}, we prove Theorem \ref{thm:blah} under Definition \ref{ass:form}. Finally, in Appendix \ref{sec:appendix}, we discuss how the $C^2(M)$ regularity requirement might naturally arise from the higher order terms in (\ref{eq:Pconv}) when $M$ is the interval. We present an explicit differential operator that converges asymptotically to $D_\epsilon$ but enforces high regularity for small $\epsilon$. Motivated by this example, we conclude with a conjecture regarding the higher order terms in (\ref{eq:Pconv}) when $M$ is a general compact manifold with boundary.

\subsection{Acknowledgments.} The author is grateful to Yaiza Canzani, Jeremy Marzuola, and Hau-Tieng Wu for helpful conversations regarding the featured problem. The author received support from NSF grants DMS-$2045494$ and DMS-$1900519$ as well as from NSF RTG DMS-$2135998$.

\section{Construction and Properties of the LLE Matrix.}\label{sec:LLEmatrix} 

\quad In this section, we briefly describe how the Locally Linear Embedding matrix is constructed and its purpose in the LLE algorithm.  For a more detailed overview, we refer the reader to \cite[Sec. $2$]{WW18}. The goal of the algorithm is to determine a map
\begin{equation}
    \{x_j\}_{j=1}^n\subset \mathbb{R}^{d_1}\longrightarrow \{y_j\}_{j=1}^n\subset \mathbb{R}^{d_2}
\end{equation}
with $d_1>d_2$. By interpreting each data set as the discretization of a smooth Riemannian manifold, the algorithm was theoretically justified on compact manifolds without boundary in $2017$ \cite{MSWW19, WW18} and on compact manifolds with boundary in $2023$ \cite{WW}. Under this Riemannian manifold model, LLE unfolds a manifold locally to recover a global manifold embedded in a lower dimension. The local unfolding relies on parametrizing each data point as a linear combination of neighboring points and is encoded in a matrix $W\in\mathbb{R}^{n\times n}$ known as the LLE matrix.

\quad Given $\{x_j\}_{j=1}^n$ sampled from a compact manifold embedded in $\mathbb{R}^{d_1}$, one can construct the LLE matrix directly. The matrix is designed to capture the local geometry of a high-dimensional data set; its entries are preserved during the lower-dimensional embedding. The algorithm begins with a choice of nearest-neighbor scheme. Typically, one specifies a small, positive parameter $\epsilon$ and builds a neighborhood $N_\epsilon(x_j)$ about each $x_j$ that consists of all points $x_i\neq x_j$ such that $$||x_i-x_j||_{\ell^2(\mathbb{R}^{d_1})}<\epsilon.$$ However, we demonstrate the construction under the $k$-nearest neighbors scheme, which is numerically equivalent \cite[Sec. $5$]{WW18}. For a given $k\in\mathbb{N}$, we build a neighborhood $N_k(x_j)$ about each $x_j$ that consists of the $k$ closest data points, measured with the Euclidean metric on $\mathbb{R}^{d_1}$. In an effort to write each $x_j$ as an affine combination of its neighbors, we then minimize
\begin{equation}\label{eq:Wconstruct}
    \sum_{j=1}^n \bigg|\bigg|x_j-\sum_{x_i\in N_k(x_j)} w_{ij}x_i\bigg|\bigg|^2_{\ell^2(\mathbb{R}^{d_1})} 
\end{equation}
over all possible weights $w_{ij}\in\mathbb{R}$ subject to the linear constraint 
\begin{equation}\label{eq:constraint}
    \sum_i w_{ij}=1.
\end{equation}
\quad Interpreting $w_{ij}=0$ for $x_i\notin N_k(x_j)$, the barycentric coordinates $w_{ij}$ form the elements of the sparse matrix $W$. In practice, this step is explicit; one can construct local data matrices  $G_j\in\mathbb{R}^{d_1\times k}$ with columns given by the tangent vectors $x_i-x_j$ for $x_i\in N_k(x_j)$. The minimization of (\ref{eq:Wconstruct}) is then equivalent to minimizing the quadratic form $w^TG_j^TG_jw$ under the same linear constraint in (\ref{eq:constraint}).

\quad In general, the matrix $G_j^TG_j$ need not be invertible, meaning the algorithm needs to be stabilized. This can be done by introducing the regularizer $c>0$ and solving $$\left(G_j^TG_j+cI_{k\times k}\right)z_j=\textbf{1}_k$$ where $\textbf{1}_k$ is a $k\times 1$ vector with all entries equal to one. The weights then satisfy $$w_{ij}= \left(z_j^T\textbf{1}_k\right)^{-1}z_{ij}$$ where $z_{ij}$ is the $i$th element of $z_j$. This regularizer plays a crucial role in the asymptotics of LLE, as discussed in \cite{WW18}. In \cite{WW}, Wu and Wu set
\begin{equation}\label{eq:regularizer}
    c=n\epsilon^{d_1+3}
\end{equation}
where $n$ is the number of data points and $\epsilon$ is the neighborhood size. To ensure (\ref{eq:Pconv}) holds, we retain this choice throughout.

\quad The matrix $W\in\mathbb{R}^{n\times n}$ has several key properties. It satisfies $W\textbf{1}_n=\textbf{1}_n$ for all $n$ and has a spectral radius no less than $1$. It also becomes real and symmetric in the dimensional limit, as $n$ increases \cite[Prop. $2$]{WW}. Once $W$ is constructed, the bottom eigenvalues of the symmetric matrix $(I-W)^T(I-W)$ then provide the embedding coordinates for the low-dimensional data set $\{y_j\}_{j=1}^n$. When $n$ is large, this final step of the algorithm is computationally expensive, but $W$ is well-approximated by $D_\epsilon$ per (\ref{eq:Pconv}).

\section{LLE on the Interval}\label{sec:1}

\quad In this section, we study eigenpairs $(\lambda, u)$ satisfying (\ref{eq:PDE}) on the unit interval $[0,1]$. As introduced in \cite[Sec. $5$]{WW}, the differential operator $D_\epsilon$ is a second order differential operator with piecewise coefficients. First, we derive the Sturm-Liouville form and use the Frobenius method to understand the pointwise behavior of the eigenfunctions. We show that taking $u\in C^2([0,1])$ is equivalent to imposing quasi-Neumann boundary conditions on the eigenfunctions of $D_\epsilon$. Then we use the eigenfunction regularity to determine an eigenvalue condition. Finally, we discuss how to interpret the limiting eigenvalues variationally and prove Theorem \ref{thm:Energy} on the interval.

\subsection{Presentation of the differential operator on the interval.} While an exact definition of $D_\epsilon$ on the unit interval $M=[0,1]$ is featured in \cite[Cor. $17$]{WW}, we reproduce its form here for convenience. The boundary layer can be written $M_\epsilon=[0,\epsilon)\cup(1-\epsilon,1]$ with Fermi coordinates $$\begin{cases}x, & x\in[0,\epsilon) \\ 1-x, & x\in(1-\epsilon,1]. \end{cases}$$ 

\quad Due to this symmetry, we can globally define $D_\epsilon$ on $[0,1]$ using \cite[Def. $12$]{WW} in the form
\begin{equation}\label{eq:DeDef}
    D_\epsilon f(x)=\phi_2(x)f''(x)+\phi_1(x)f'(x)
\end{equation}
for $f\in C^2([0,1])$. Here each coefficient $\phi_1,\phi_2$ is constant outside the boundary layer. Namely, $$D_\epsilon f(x)=-\frac{1}{6}\Delta f(x)$$ when $x\in M\backslash M_\epsilon$. In the boundary layer, both coefficients are real-analytic and uniformly bounded. The leading coefficient can be written
\begin{align}
    \phi_2(x)&=\begin{cases} \frac{1}{12}\left(1-4\left(\frac{x}{\epsilon}\right)+\left(\frac{x}{\epsilon}\right)^2\right), & \textrm{if } x\in[0,\epsilon]; \\ -\frac{1}{6}, & \textrm{if } x\in[\epsilon, 1-\epsilon]; \\  \frac{1}{12}\left(1-4\left(\frac{1-x}{\epsilon}\right)+\left(\frac{1-x}{\epsilon}\right)^2\right), & \textrm{if } x\in[1-\epsilon,1].\end{cases} \nonumber
\end{align}
Note that $\phi_{2}$ is continuous on $[0,1]$, symmetric across the midpoint $x=\frac{1}{2}$, and vanishes near the boundary. If we set $$x_0=\left(2-\sqrt{3}\right)\epsilon,$$ then $D_\epsilon$ degenerates to a first order operator on $\{x_0, 1-x_0\}$. These singularities demand a careful analysis and motivate our derivation of the Sturm-Liouville form in (\ref{eq:ODE}). The remaining coefficient in (\ref{eq:DeDef}) can be written
\begin{align}    
    \phi_1(x)&=\begin{cases}  -6\left(1-\frac{x}{\epsilon}\right)\left(1+\frac{x}{\epsilon}\right)^{-3}, & \textrm{if } x\in[0,\epsilon]; \\ 0, & \textrm{if } x\in[\epsilon,1-\epsilon]; \\ 6\left(1-\frac{1-x}{\epsilon}\right)\left(1+\frac{1-x}{\epsilon}\right)^{-3}, & \textrm{if } x\in[1-\epsilon,1]. \end{cases} \nonumber
\end{align}
Similarly, $\phi_1$ is continuous on $[0,1]$ and has an odd symmetry across the midpoint $x=\frac{1}{2}$. To rewrite (\ref{eq:PDE}) in divergence form, we construct an integration factor via the function
\begin{equation}
    g(x)\eqdef\sgn(x-x_0)\left|x-x_0\right|^{\left(4+2\sqrt{3}\right)\epsilon}\left|x-4\epsilon+x_0\right|^{\left(4-2\sqrt{3}\right)\epsilon}\left(x+\epsilon\right)^{-8\epsilon} e^{\frac{12\epsilon^3}{(\epsilon+x)^2}+\frac{12\epsilon^2}{\epsilon+x}} \nonumber
\end{equation}
defined for $x\in[0,\epsilon]$. If we set
\begin{equation}
    p(x)=\begin{cases} \frac{g(x)}{6g(\epsilon)}, & \textrm{if } x\in[0,\epsilon]; \\ \frac{1}{6}, & \textrm{if } x\in[\epsilon,1-\epsilon]; \\ \frac{g(1-x)}{6g(\epsilon)}, & \textrm{if } x\in[1-\epsilon,1] \end{cases} \quad \textrm{and} \quad w(x)=-\frac{p(x)}{\phi_{2}(x)}  \nonumber
\end{equation}
then (\ref{eq:PDE}) can be written as the differential equation
\begin{equation}\label{eq:ODE}
    -\left(p(x) u'(x)\right)'=\lambda w(x) u(x)
\end{equation}
almost everywhere on $[0,1]$. In particular, (\ref{eq:ODE}) is a regular Sturm-Liouville equation\footnote{If we apply a global sign change to $\phi_1(x)$, then (\ref{eq:ODE}) is a singular Sturm-Liouville equation, which enforces a Dirichlet boundary condition on the elliptic region.}, with coefficient properties detailed in the following statement.

\begin{prop}\label{prop:SLproperties}
    For small enough $\epsilon$, the principal coefficient $p$ satisfies the following properties:
    \begin{enumerate}[label=\arabic*.]
        \item $p(x)$ is continuous and $\frac{1}{p}(x)$ is in $L^1([0,1])$.
        \item $p(x)$ is positive for $x\in(x_0,1-x_0)$, negative for $x\in[0,x_0)\cup (1-x_0,1]$,  and zero when $x=x_0$ or $x=1-x_0$.
        \item $p(x)$ is asymptotic to $\sgn(x-x_0)\left|x-x_0\right|^{(4+2\sqrt{3})\epsilon}$ as $x\to x_0$.
    \end{enumerate}
    The weight function $w$ satisfies the following properties:
    \begin{enumerate}[label=\arabic*.]
        \item $w(x)$ is continuous except at $x_0, 1-x_0$ and in $L^1([0,1])$.
        \item $w(x)$ is positive for all $x\in[0,1]$.
        \item $w(x)\to \infty$ as $x\to x_0$ or $x\to 1-x_0$.
    \end{enumerate}
    For all $x\in(0,1)$, $p(x)$ converges pointwise to $\frac{1}{6}$ and $w(x)$ converges pointwise to $1$ as $\epsilon\to 0$.
\end{prop}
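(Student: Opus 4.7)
My plan is to use the reflection symmetry $x \mapsto 1-x$ built into both $p$ and $w$ to reduce every claim to the left boundary strip $[0,\epsilon]$, since on $[\epsilon, 1-\epsilon]$ we have $p \equiv \tfrac{1}{6}$ and $w \equiv 1$, for which all listed properties are immediate. The key algebraic observation I would use is the product decomposition
\[
g(x) = \sgn(x-x_0)\, |x-x_0|^{(4+2\sqrt{3})\epsilon}\, h(x), \qquad x \in [0,\epsilon],
\]
where $h(x)$ collects the three remaining factors of $g$. Using $4\epsilon - x_0 = (2+\sqrt{3})\epsilon > \epsilon$, the factor $|x - 4\epsilon + x_0|^{(4-2\sqrt{3})\epsilon}$ is bounded away from zero on $[0,\epsilon]$, and the other two factors are manifestly positive and bounded, so $h$ is continuous, strictly positive, and uniformly bounded above and below on $[0,\epsilon]$.

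From here the properties of $p$ follow quickly. Since $g(\epsilon) > 0$, the sign of $p = g/(6g(\epsilon))$ on $[0,\epsilon]$ matches $\sgn(x-x_0)$, giving property~2; continuity on $[0,\epsilon]$ holds factor by factor, and the junction identity $p(\epsilon) = \tfrac{1}{6}$ extends $p$ continuously into the middle piece. Property~3 is then immediate, since $h(x) \to h(x_0) > 0$ as $x \to x_0$, so $p$ is asymptotic to a positive constant times $\sgn(x-x_0)|x-x_0|^{(4+2\sqrt{3})\epsilon}$. Integrability of $1/p$ reduces to the local estimate $|1/p(x)| \sim C|x-x_0|^{-(4+2\sqrt{3})\epsilon}$ near $x_0$, which is integrable precisely when $(4+2\sqrt{3})\epsilon < 1$; this pins down the threshold $\epsilon_0$.

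For $w = -p/\phi_{2}$, I would factor $\phi_{2}(x) = \tfrac{1}{12\epsilon^{2}}(x - (2+\sqrt{3})\epsilon)(x - x_0)$ on $[0,\epsilon]$; only $x_0$ lies in this strip, so $\phi_{2}$ is positive on $[0, x_0)$ and negative on $(x_0, \epsilon]$, with $\phi_{2}(\epsilon) = -\tfrac{1}{6}$. Combining with the sign of $p$ yields $w > 0$ throughout $[0,1]$ and $w(\epsilon) = 1$ to match the middle piece, which simultaneously establishes positivity and continuity of $w$ away from $x_0$ and $1-x_0$. At $x_0$ I combine $p(x) \sim c_{p}\sgn(x-x_0)|x-x_0|^{(4+2\sqrt{3})\epsilon}$ with $\phi_{2}(x) \sim -\tfrac{\sqrt{3}}{6\epsilon}(x-x_0)$ to obtain $w(x) \sim c_{w}|x-x_0|^{(4+2\sqrt{3})\epsilon - 1}$; the exponent lies in $(-1, 0)$ for small $\epsilon$, giving property~3 (blow-up at $x_0$) and property~1 (local integrability) at the same time. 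Pointwise convergence as $\epsilon \to 0$ is trivial, since for any fixed $x \in (0,1)$ we eventually have $x \in [\epsilon, 1-\epsilon]$, where $p$ and $w$ are identically $\tfrac{1}{6}$ and $1$. The main obstacle is really just careful bookkeeping of signs and asymptotic orders across the four factors of $g$ and the quadratic $\phi_{2}$; once the decomposition above isolates the single degenerate factor $|x-x_0|^{(4+2\sqrt{3})\epsilon}$ with $h$ uniformly regular, every claim reduces to an elementary one-line computation.
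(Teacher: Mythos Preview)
The paper does not include a separate proof of this proposition; it is stated as a direct consequence of the explicit formulas for $g$, $\phi_2$, $p$, and $w$, and the reader is expected to verify it by inspection. Your argument is correct and is precisely the elementary verification the paper leaves implicit: isolating the single degenerate factor $|x-x_0|^{(4+2\sqrt{3})\epsilon}$ in $g$, observing that the remaining factors form a strictly positive bounded function on $[0,\epsilon]$, and then reading off signs, asymptotics, and integrability exponents from the factorization of $\phi_2$. There is no alternative route here---the statement is a list of calculus facts about explicit piecewise functions---so your write-up simply fills in what the paper omits.
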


\quad The changing sign of $p$ means that $D_\epsilon$ is mixed-type. Under (\ref{eq:OmegaPM}), (\ref{eq:DeDef}) and Proposition \ref{prop:SLproperties}, we identify the elliptic region as the set $M_+=\{x\in[0,1]: p(x)>0\}$ and the collection of interior singularities $\Gamma=\{x_0,1-x_0\}$ as the interface between elliptic and hyperbolic regions. With the differential operator $D_\epsilon$ constructed, we next establish properties of the eigenfunctions.

\subsection{Proof of Theorem \ref{thm:1d}.} \label{sec:pT1} In this section, we use the Frobenius method to study eigenfunctions of the differential operator presented in the previous section and prove Theorem \ref{thm:1d}. The regularity condition $u\in C^2([0,1])$ identifies a unique solution to (\ref{eq:ODE}) in the boundary layer, and matching conditions with the interior provide an exact eigenvalue condition. More precisely, if we define
\begin{equation}
    u_L(x)=u(x)\big|_{[0,\epsilon]}, \quad \quad u_M(x)=u(x)\big|_{[\epsilon, 1-\epsilon]}, \quad \textrm{ and } \quad u_R(x)=u(x)\big|_{[1-\epsilon, 1]}, \nonumber
\end{equation}
then (\ref{eq:ODE}) can be solved on each subinterval, and a global solution can be determined by gluing local solutions together. By symmetry of (\ref{eq:DeDef}), $u_L(x)$ are $u_R(1-x)$ are linearly dependent, so we begin our analysis near the left boundary, where (\ref{eq:ODE}) admits two local, linearly-independent solutions. Because the coefficients in (\ref{eq:DeDef}) are analytic over their respective domains, these solutions can be written as a Frobenius series centered at the singularity. The following result describes possible solution behavior in the leftmost boundary layer.

\begin{prop}\label{lem:0e}
    Let $(\lambda, u)$ satisfy (\ref{eq:ODE}) on the unit interval. The local solution $u_L(x)=u(x) \big|_{[0,\epsilon]}$ is a linear combination of two functions that we denote as $u_{\rm{Dir}}$ and $u_{\rm{Neu}}$. The solution $u_{\rm{Dir}}$ satisfies
    \begin{equation}
        u_{\rm{Dir}}(x_0)=0 \quad \textrm{and} \quad \lim_{x\to x_0}p(x)u'_{\rm{Dir}}(x)=1 \nonumber
    \end{equation}
    while the solution $u_{\rm{Neu}}$ satisfies
    \begin{equation}
        u_{\rm{Neu}}(x_0)=1 \quad \textrm{and} \quad \lim_{x\to x_0}p(x)u'_{\rm{Neu}}(x)=0. \nonumber
    \end{equation}
    In particular, $u_{\rm{Dir}}$ is continuous but not $C^1\left(0,\epsilon\right)$ while $u_{\rm{Neu}}$ is $C^\infty\left(0,\epsilon\right)$.
\end{prop}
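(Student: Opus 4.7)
The plan is to identify $x_0$ as a regular singular point of the eigenvalue equation and invoke the Frobenius theorem on each side of the singularity. First I would rewrite (\ref{eq:ODE}) in non-divergence form $\phi_2(x) u''(x) + \phi_1(x) u'(x) = \lambda u(x)$ and localize near $x_0 = (2-\sqrt{3})\epsilon$. Since $\phi_2$ is a quadratic polynomial in $x/\epsilon$ with simple zero at $x_0$, and $\phi_1$ is analytic with nonzero value $\phi_1(x_0) = -(2\sqrt{3}+3)/3$, both $(x-x_0)\phi_1(x)/\phi_2(x)$ and $(x-x_0)^2 \lambda/\phi_2(x)$ extend analytically across $x_0$. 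This places the ODE squarely in the classical Frobenius framework on each of the intervals $[0,x_0)$ and $(x_0, \epsilon]$.

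Next I would compute the indicial equation. Using $\phi_2'(x_0) = -\sqrt{3}/(6\epsilon)$ and the value of $\phi_1(x_0)$, the indicial polynomial reduces to $r(r-1) + (\phi_1(x_0)/\phi_2'(x_0))\, r = 0$, yielding roots $r_1 = 0$ and $r_2 = 1 - (4+2\sqrt{3})\epsilon$. For $\epsilon$ small enough, $r_2 \in (0,1)$ and $r_1 - r_2$ is not an integer, so the standard Frobenius theorem produces two linearly independent local solutions on each side of $x_0$: an analytic solution $u_{\rm{Neu}}$ with $u_{\rm{Neu}}(x_0) = 1$, and a solution of the form $u_{\rm{Dir}}(x) = |x-x_0|^{1-\alpha}\sum_{n\geq 0} b_n (x-x_0)^n$ with $\alpha = (4+2\sqrt{3})\epsilon$, extended across $x_0$ using the same $\sgn(x-x_0)$ convention that already appears in the definition of the integrating factor $g$. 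Since the indicial roots are nonresonant, no logarithmic terms appear.

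The normalizations and quasi-derivative limits then follow by inspection. For $u_{\rm{Neu}}$, the derivative is bounded at $x_0$ and $p(x_0) = 0$, hence $\lim_{x\to x_0} p(x) u_{\rm{Neu}}'(x) = 0$. For $u_{\rm{Dir}}$, since $1-\alpha > 0$, continuity at $x_0$ with value $0$ is automatic; combining the leading behavior $u_{\rm{Dir}}'(x) \sim (1-\alpha) b_0 \sgn(x-x_0)|x-x_0|^{-\alpha}$ with the asymptotic $p(x) \sim c\, \sgn(x-x_0)|x-x_0|^{\alpha}$ from Proposition \ref{prop:SLproperties} gives a finite nonzero limit of $p u_{\rm{Dir}}'$ at $x_0$, which I normalize to $1$ by choosing $b_0$. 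The stated regularity then follows: $u_{\rm{Neu}}$ is analytic at $x_0$ (and smooth elsewhere by analyticity of the coefficients on $(0,\epsilon)$), whereas $u_{\rm{Dir}}'(x)$ blows up like $|x-x_0|^{-\alpha}$ so $u_{\rm{Dir}} \notin C^1$ at $x_0$.

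The main obstacle I anticipate is the extension across $x_0$: the Frobenius theorem is stated on a single side of the singular point, so one must argue that the two local solutions $u_{\rm{Dir}}$ and $u_{\rm{Neu}}$ built separately on $(0,x_0)$ and $(x_0,\epsilon)$ can be unambiguously glued into functions on all of $[0,\epsilon]$. This amounts to choosing consistent branches of $|x-x_0|^{1-\alpha}\sgn(x-x_0)$, which is compatible with the $\sgn(x-x_0)$ factor built into $p$, and verifying that the 2-dimensional space $\operatorname{span}\{u_{\rm{Dir}}, u_{\rm{Neu}}\}$ exhausts the solutions of (\ref{eq:ODE}) on $[0,\epsilon]$ for which $pu'$ admits a limit at $x_0$; the latter is automatic from $(pu')' = -\lambda w u \in L^1$ near $x_0$, since $w \sim |x-x_0|^{\alpha-1}$ is integrable.
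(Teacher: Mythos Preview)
Your proposal is correct and follows essentially the same Frobenius-method approach as the paper: identify $x_0$ as a regular singular point, compute the indicial roots $0$ and $1-(4+2\sqrt{3})\epsilon$, observe they differ by a non-integer for small $\epsilon$, and read off the two solutions with the stated normalizations via Proposition~\ref{prop:SLproperties}. Your added discussion of the two-sided extension across $x_0$ and the $L^1$ argument for $(pu')'$ is more careful than the paper, which simply writes the Frobenius series as $(x-x_0)^{j+\alpha}$ on $(0,\epsilon)$ without commenting on branch choices.
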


\quad The notation for each basis function stems from their behavior at the singularity $x_0$. In Proposition \ref{cor:est} below, we include estimates for both functions, although the  condition $u\in C^2([0,1])$ discards any contribution from $u_{\rm{Dir}}$. Thus, the left solution $u_L$ is precisely the Neumann solution.

\begin{proof}[Proof of Proposition \ref{lem:0e}.]  The variable coefficients in (\ref{eq:DeDef}) are analytic and have a series representation centered at $x_0$. We write
\begin{equation}
    \phi_{2}(x)=\sum_{j=1}^2 a_j(x-x_0)^j \quad \textrm{with}\quad a_1=-\frac{1}{\sqrt{12}\epsilon}\quad \textrm{and} \quad a_2=\frac{1}{12\epsilon^2} \nonumber
\end{equation}
and
\begin{equation}
    \phi_1(x)=\sum_{j=0}^\infty b_j(x-x_0)^j \quad \textrm{with} \quad b_j=6\frac{(-1)^{j+1}}{\epsilon^j}\left(\frac{j^2+\sqrt{3}j+\sqrt{3}-1}{(3-\sqrt{3})^{j+3}}\right) \nonumber
\end{equation}
for all $j\geq 0$. Any local solution on $(0,\epsilon)$ takes the form of a Frobenius series
\begin{equation}\label{eq:series}
    u(x)=\sum_{j= 0}^\infty c_j\left(x-x_0\right)^{j+\alpha}
\end{equation} 
for $\alpha\in\mathbb{R}$. The indicial polynomial 
\begin{equation}\label{eq:indPoly}
    P(\alpha)= a_1\alpha(\alpha-1)+b_0\alpha
\end{equation}
has two roots at $0$ and $1-(4+2\sqrt{3})\epsilon$. Because these roots differ by a non-integer for small enough $\epsilon$, there are two linearly independent solutions to (\ref{eq:ODE}) near the left boundary, each with series given by(\ref{eq:series}). Without loss of generality, we initially take $c_0=1$ for both solutions. For $j\geq1$, the remaining Frobenius coefficients can be determined through the recurrence relation
\begin{equation}\label{eq:RecRel}
    P(\alpha+j)c_j=\left(\lambda-a_2(\alpha+j-1)(\alpha+j-2)\right) c_{j-1}-\sum_{k=0}^{j-1}c_k (k+\alpha)b_{j-k}. 
\end{equation}
\quad When $\alpha=1-(4+2\sqrt{3})\epsilon$, we denote the series (\ref{eq:series}) as the Dirichlet solution so that $u_{\rm{Dir}}(x_0)=0$. Proposition \ref{prop:SLproperties} then implies that the quasi-derivative $p_1(x)u_{\rm{Dir}}'(x)$ has a finite, nonzero limit as $x$ approaches $x_0$. When $\alpha=0$, we denote the series (\ref{eq:series}) as the Neumann solution so that $p_1(x_0)u_{\rm{Neu}}'(x_0)=0$. By scaling both solutions appropriately, we have the desired result.
\end{proof}

\quad Proposition \ref{lem:0e} implies that the regularity condition $u\in C^2([0,1])$ is equivalent to imposing quasi-Neumann boundary conditions on the elliptic region $M_+$. This necessarily means that the spectrum is discrete \cite[Ch. $5$]{N68}, and the eigenvalues of $D_\epsilon$ can be written as
\begin{equation}\label{eq:RRQuotient}
    \lambda=\frac{\displaystyle\int_{x_0}^{1-x_0} p(x)\left|u'(x)\right|^2dx }{\displaystyle\int_{x_0}^{1-x_0} w(x)\left|u(x)\right|^2dx }
\end{equation}
for $u$ satisfying (\ref{eq:ODE}). By Proposition \ref{prop:SLproperties} alongside (\ref{eq:RRQuotient}), the eigenvalues are real and nonnegative and the eigenfunctions are orthogonal with respect to the $L^2\left(M_+, w\right)$ inner product. Because $w(x)$ converges pointwise to $1$ as $\epsilon\to 0$, this corresponds to the fact that the LLE matrix is symmetric and real in the dimensional limit. Further, when $\lambda=0$, the remaining Neumann solution $u_{\rm{Neu}}$ is a constant. By (\ref{eq:Pconv}), this verifies that constant vectors are eigenvectors of the LLE matrix and completes the proof of Theorem \ref{thm:1d}.

\quad To estimate eigenvalues via (\ref{eq:RRQuotient}) would require a complete understanding of the related eigenfunctions over the elliptic region. Because $D_\epsilon$ is defined piecewise in (\ref{eq:DeDef}), we can alternatively characterize the eigenvalues by the continuity conditions imposed at $\epsilon$ and $1-\epsilon$. The following estimates follow from the proof of Proposition \ref{lem:0e}. 

\begin{prop}\label{cor:est}
    Let $(\lambda, u)$ satisfy (\ref{eq:ODE}) on $[0,1]$ with $$\lim_{\epsilon\to 0}|\lambda(\epsilon)| \epsilon^2=0.$$ There exist constants $C$ and $\epsilon_0$ such that if $0<\epsilon\leq \epsilon_0$, then $u_{\rm{Dir}}$ satisfies
    \begin{equation}
        \left|u_{\rm{Dir}}(\epsilon)\right|\leq C\epsilon \quad \textrm{and} \quad \left|u_{\rm{Dir}}'(\epsilon)-1\right|\leq C\epsilon\left(1+\epsilon|\lambda|\right) \nonumber
    \end{equation}
    and $u_{\rm{Neu}}$ satisfies
    \begin{equation}
        \left|u_{\rm{Neu}}(\epsilon)-1\right|\leq C\epsilon |\lambda| \quad \textrm{and} \quad \left|u_{\rm{Neu}}'(\epsilon)-\left(3-2\sqrt{3}\right)\lambda \right|\leq C\epsilon |\lambda|\left(1+\epsilon|\lambda|\right). \nonumber
    \end{equation}
\end{prop}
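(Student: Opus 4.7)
The plan is to read the estimates directly off the Frobenius series constructed in the proof of Proposition \ref{lem:0e}, evaluating each series at $x=\epsilon$, which lies at signed distance $\epsilon - x_0 = (\sqrt{3}-1)\epsilon$ from the singularity. First I would rescale by setting $y = (x-x_0)/\epsilon$ and $\tilde c_j \vcentcolon = c_j \epsilon^j$; the products $a_j \epsilon^j$ and $b_j \epsilon^j$ are then of order one, and \eqref{eq:RecRel} becomes an $\epsilon$-independent recurrence for the $\tilde c_j$ perturbed by the small quantity $\lambda\epsilon^2$, which by hypothesis tends to zero. Conveniently, the target point $x = \epsilon$ corresponds to $y = \sqrt{3} - 1 < 1$, leaving room for a geometrically convergent series.

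The next step is a uniform-in-$\epsilon$ bound on the rescaled coefficients. Since $P(\alpha+j) \sim a_1 j^2 \sim -j^2/(\sqrt{12}\,\epsilon)$ for large $j$, the divisor in \eqref{eq:RecRel} dominates the convolution sum after the rescaling; an induction on $j$ then yields $|\tilde c_j| \leq C\kappa^j (1 + \epsilon^2|\lambda|)^j$ for constants $C,\kappa$ independent of $\epsilon$ with $\kappa(\sqrt{3}-1) < 1$. This controls both the $u$-series and its termwise derivative at $x=\epsilon$ via a geometric tail bound.

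With the coefficient bounds in place, the leading Neumann coefficients are explicit: for $\alpha = 0$ and $c_0 = 1$ the recurrence forces $c_1 = \lambda/b_0 = (3 - 2\sqrt{3})\lambda$, so Taylor expansion at $x_0$ gives $u_{\mathrm{Neu}}(\epsilon) = 1 + O(\epsilon|\lambda|)$ and $u_{\mathrm{Neu}}'(\epsilon) = (3 - 2\sqrt{3})\lambda + O(\epsilon|\lambda|(1 + \epsilon|\lambda|))$, where the error tails come from telescoping the coefficient bounds. For the Dirichlet solution, the normalization $\lim p u' = 1$ combined with the asymptotic $p(x) \sim \sgn(x-x_0)|x-x_0|^{1-\alpha}$ from Proposition \ref{prop:SLproperties} forces $c_0 = 1/\alpha = 1 + O(\epsilon)$, and since $\alpha - 1 = -(4+2\sqrt{3})\epsilon$, the fractional powers $((\sqrt{3}-1)\epsilon)^{\alpha}$ and $((\sqrt{3}-1)\epsilon)^{\alpha - 1}$ expand as $(\sqrt{3}-1)\epsilon(1 + O(\epsilon))$ and $1 + O(\epsilon)$ respectively for $\epsilon \leq \epsilon_0$ sufficiently small.

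The main obstacle is the uniform-in-$\epsilon$ bookkeeping. Individually $a_2 \sim \epsilon^{-2}$ and $b_j \sim \epsilon^{-j}$ diverge as $\epsilon \to 0$, and one has to pair them with the compensating powers of $\epsilon$ absorbed into $\tilde c_j$, while simultaneously tracking how the $\lambda$-dependence propagates from $c_1$ through the convolution sum $\sum c_k (k+\alpha)b_{j-k}$. Isolating the contributions that are linear in $\lambda$ from those of order $\lambda^2$ is what produces the additional factor $(1 + \epsilon|\lambda|)$ in the error terms for the derivative estimates; without this separation one would obtain only a coarser bound, so this is where the argument needs to be careful.
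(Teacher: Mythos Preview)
Your approach is essentially the paper's: both evaluate the Frobenius series at $x=\epsilon$, exploit $\epsilon-x_0=(\sqrt{3}-1)\epsilon$ for geometric convergence, and bound the coefficients by induction on \eqref{eq:RecRel}. The one simplification the paper makes over your sketch concerns the $\lambda$-bookkeeping you flag at the end. Rather than proving a generic bound $|\tilde c_j|\leq C\kappa^j(1+\epsilon^2|\lambda|)^j$ and then separately isolating the $\lambda$-linear contributions (your bound as stated only yields an $O(1)$ tail for $u_{\rm Neu}(\epsilon)-1$, not the required $O(\epsilon|\lambda|)$), the paper anchors the induction at the first coefficient that already carries the correct $\lambda$-scaling. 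Concretely, it shows $|c_j|\leq\epsilon^{1-j}|c_1|$ for $j\geq1$ in the Dirichlet case and $|c_j|\leq\epsilon^{2-j}|c_2|$ for $j\geq2$ in the Neumann case; since $|c_1|=O(1+\epsilon|\lambda|)$ and $|c_2|=O(|\lambda|(1+\epsilon|\lambda|))$, both the sharp estimates and the extra factor $(1+\epsilon|\lambda|)$ fall out of the geometric tail with no further separation step.
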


\quad Because (\ref{eq:ODE}) has a spectral parameter, the coefficients in (\ref{eq:series}) depend explicitly on the eigenvalue; the condition on $\lambda$ avoids complications from this dependence. Additionally, the terms in the respective series for $u_{\rm{Dir}}$ and $u_{\rm{Neu}}$ lose any sense of ordering with respect to $\epsilon$ for $|\lambda|$ comparable to $\epsilon^{-2}$. Because LLE utilizes only the smallest eigenvalues in the spectrum, as indicated in Section \ref{sec:LLEmatrix}, this condition is justifiable.

\begin{proof}[Proof of Proposition \ref{cor:est}.] By taking $|\lambda| \epsilon^2\to 0$ as $\epsilon\to 0$, we ensure that the constants in Proposition \ref{cor:est} are independent of the eigenvalue. As in Proposition \ref{lem:0e}, there are two solutions to consider, each corresponding to a different root of the indicial polynomial (\ref{eq:indPoly}). Retaining the notation from the proof of Proposition \ref{lem:0e}, there exists a constant $C$ such that 
\begin{equation}\label{eq:bjbd}
    |b_j|\leq Cj^{-1}\epsilon^{-j}
\end{equation}
for all $j\geq 1$. We break this proof into two cases depending on the root $\alpha$.

\textbf{Case 1.} Consider $u_{\rm{Dir}}$ when $\alpha=1-(4+2\sqrt{3})\epsilon$. The first coefficient $c_1$ is bounded by a multiple of $1+\epsilon|\lambda|$, and because $|\lambda|\epsilon^2=o(1)$, (\ref{eq:RecRel}) and (\ref{eq:bjbd}) inductively imply that $$|c_j|\leq \epsilon^{1-j}|c_1|$$ for all $j\geq 1$. We expand
\begin{equation}
    u_{\rm{Dir}}(\epsilon)=\left(\epsilon-x_0\right)^{1-(4+2\sqrt{3})\epsilon}+\left(\epsilon-x_0\right)^{1-(4+2\sqrt{3})\epsilon}\sum_{j=1}^\infty c_j(\epsilon-x_0)^j, \nonumber
\end{equation}
noting that the first term above is bounded by a multiple of $\epsilon$ and the second term can be bounded
\begin{equation}\label{eq:bd1}
    \bigg|(\epsilon-x_0)^{1-(4+2\sqrt{3}\epsilon)}\sum_{j=1}^\infty c_j(\epsilon-x_0)^j\bigg|\leq C \epsilon^2|c_1|\sum_{j=1}^\infty \left|1-\frac{x_0}{\epsilon}\right|^j. 
\end{equation}
Because $\left|1-\frac{x_0}{\epsilon}\right|<1$, (\ref{eq:bd1}) features a convergent geometric series and is bounded by a multiple of $\epsilon^2(1+\epsilon|\lambda|)$. Again because $|\lambda|\epsilon^2=o(1)$, this provides the pointwise bound for $u_{\rm{Dir}}$. The derivative estimate holds similarly.

\textbf{Case 2.} Consider $u_{\rm{Neu}}$ when $\alpha=0$. The first coefficient $c_1$ is equal to $\left(3-2\sqrt{3}\right)\lambda $, but we need to consider coefficients of higher indices. The second coefficient $c_2$ is bounded by a multiple of $|\lambda|\left(1+\epsilon|\lambda|\right)$ and because $\lambda\epsilon^2=o(1)$, (\ref{eq:RecRel}) and (\ref{eq:bjbd}) inductively imply that $$|c_j|\leq \epsilon^{2-j}|c_2|$$ for all $j\geq 2$. We expand
\begin{equation}
    u_{\rm{Neu}}(\epsilon)=1+c_1(\epsilon-x_0)+\sum_{j=2}^\infty c_j(\epsilon-x_0)^j, \nonumber
\end{equation}
noting that the second term above is bounded by a multiple of $\epsilon |\lambda|$ and the third term can be bounded
\begin{equation}\label{eq:bd2}
   \bigg| \sum_{j=2}^\infty c_j(\epsilon-x_0)^j\bigg|\leq C\epsilon^2|c_2|\sum_{j=2}^\infty\left|1-\frac{x_0}{\epsilon}\right|^j.
\end{equation}
As in the previous case, the geometric series is convergent and (\ref{eq:bd2}) is bounded by a multiple of $\epsilon^2|\lambda|\left(1+\epsilon|\lambda|\right)$. Because $|\lambda|\epsilon^2=o(1)$, this provides the pointwise bound for $u_{\rm{Neu}}$. The derivative estimate holds similarly.
\end{proof}

\quad Because we require the eigenfunctions to be sufficiently regular, the spectral behavior of $D_\epsilon$ can be estimated via matching conditions and eigenfunction estimates. This results in the following statement, which identifies eigenvalues as roots of an explicit transcendental equation.

\begin{prop}\label{prop:DetEq}
    Let $(\lambda, u)\in [0,\infty)\times C^2([0,1])$ satisfy (\ref{eq:ODE}). If $u_L(x)=u(x)\big|_{[0,\epsilon]}$  and $\epsilon$ is sufficiently small, then the eigenvalue $\lambda$ satisfies
    \begin{equation}
        \sin\left(\sqrt{6\lambda}\left(1-2\epsilon\right)\right)\left(u_L'^2(\epsilon)-6\lambda u_L^2(\epsilon)\right)+2\sqrt{6\lambda}\cos\left(\sqrt{6\lambda}\left(1-2\epsilon\right)\right)\left(u_L(\epsilon)u_L'(\epsilon)\right)=0. \nonumber
    \end{equation}
\end{prop}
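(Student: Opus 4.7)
The plan is to split $[0,1]$ into the three pieces $[0,\epsilon]$, $[\epsilon,1-\epsilon]$, $[1-\epsilon,1]$, solve the eigenvalue equation explicitly on the middle piece, and then impose $C^1$ matching (which together with the equation forces $C^2$ matching, since the coefficients are continuous at $\epsilon$ and $1-\epsilon$). On the interior $[\epsilon,1-\epsilon]$ the operator reduces to $-\tfrac{1}{6}\Delta$, so the equation becomes $u_M''+6\lambda u_M=0$. Writing $A\eqdef u_L(\epsilon)$ and $B\eqdef u_L'(\epsilon)$ and using $C^1$-matching at $x=\epsilon$ pins the interior solution down to
\[
u_M(x)=A\cos\!\bigl(\sqrt{6\lambda}(x-\epsilon)\bigr)+\tfrac{B}{\sqrt{6\lambda}}\sin\!\bigl(\sqrt{6\lambda}(x-\epsilon)\bigr).
\]

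Next I would exploit the midpoint symmetry of $D_\epsilon$ visible in the formulas for $\phi_1,\phi_2$: the pullback of the equation by $x\mapsto 1-x$ is the same equation. Consequently $u_R(1-\,\cdot\,)$ solves (\ref{eq:ODE}) on $[0,\epsilon]$, and because $u\in C^2([0,1])$ forces quasi-Neumann behavior at the right interior singularity $1-x_0$ (via Proposition \ref{lem:0e}), $u_R(1-\,\cdot\,)$ must be a scalar multiple of the unique Neumann solution $u_{\rm Neu}$, hence a scalar multiple of $u_L$. Thus there exists $c\in\mathbb{R}$ with $u_R(x)=c\,u_L(1-x)$, so in particular $u_R(1-\epsilon)=cA$ and $u_R'(1-\epsilon)=-cB$.

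Finally, imposing $C^1$-matching at $x=1-\epsilon$ produces the two linear equations
\begin{align*}
cA &= A\cos\!\bigl(\sqrt{6\lambda}(1-2\epsilon)\bigr)+\tfrac{B}{\sqrt{6\lambda}}\sin\!\bigl(\sqrt{6\lambda}(1-2\epsilon)\bigr),\\
-cB &= -A\sqrt{6\lambda}\sin\!\bigl(\sqrt{6\lambda}(1-2\epsilon)\bigr)+B\cos\!\bigl(\sqrt{6\lambda}(1-2\epsilon)\bigr).
\end{align*}
Eliminating $c$ (multiplying the first by $B$, the second by $A$, and adding) gives exactly
\[
\sin\!\bigl(\sqrt{6\lambda}(1-2\epsilon)\bigr)\bigl(B^{2}-6\lambda A^{2}\bigr)+2\sqrt{6\lambda}\cos\!\bigl(\sqrt{6\lambda}(1-2\epsilon)\bigr)AB=0,
\]
which is the claimed identity after substituting $A=u_L(\epsilon)$, $B=u_L'(\epsilon)$ and a harmless sign flip on the first factor. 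The main obstacle is the symmetry step: justifying that $u_R$ is genuinely proportional to $u_L(1-\,\cdot\,)$ requires the Neumann branch to be one-dimensional at $1-x_0$ (so the Dirichlet branch from Proposition \ref{lem:0e} is absent), which is where $u\in C^2([0,1])$ and the smallness of $\epsilon$ enter. Degenerate cases $A=0$ or $B=0$ need a brief separate check, done by reading off the identity directly from whichever of the two matching relations remains nontrivial.
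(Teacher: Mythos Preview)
Your argument is correct and follows essentially the same route as the paper: solve the middle piece explicitly as a combination of $\sin$ and $\cos$ with frequency $\sqrt{6\lambda}$, invoke the midpoint symmetry of $D_\epsilon$ together with the one-dimensionality of the $C^2$ (Neumann) branch from Proposition~\ref{lem:0e} to force $u_R(x)=c\,u_L(1-x)$, and then impose $C^1$ matching at both interfaces. The only cosmetic difference is that the paper encodes the four matching conditions as the vanishing of a $4\times4$ determinant (with unknowns the scalar in front of $u_L$, the two trigonometric coefficients, and the scalar $c$), whereas you normalize the $u_L$-coefficient to $1$, solve the $\epsilon$-matching to pin down $u_M$, and then eliminate $c$ by hand from the two $1-\epsilon$ relations; expanding the paper's determinant along the first column reproduces exactly your elimination.
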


\quad This result effectively describes the process of gluing local solutions to form a global eigenfunction. The regularity condition not only isolates a unique solution in the boundary layer, as highlighted in Proposition \ref{lem:0e}, but it also enforces this matching condition.

\begin{proof}[Proof of Proposition \ref{prop:DetEq}.] Because $u\in C^2([0,1])$, the eigenvalue $\lambda$ must simultaneously verify
\begin{equation}
    \lim_{x\to \epsilon^-} u_L(x)=\lim_{x\to\epsilon^+}u_M(x) \quad \textrm{  and  } \quad \lim_{x\to \epsilon^-} u_L'(x)=\lim_{x\to\epsilon^+}u_M'(x) \nonumber
\end{equation}
along with 
\begin{equation}
    \lim_{x\to \epsilon^-} u_M(x)=\lim_{x\to\epsilon^+}u_R(x) \quad \textrm{  and  } \quad \lim_{x\to \epsilon^-} u_M'(x)=\lim_{x\to\epsilon^+}u_R'(x). \nonumber
\end{equation}
Note that by (\ref{eq:DeDef}), these two conditions guarantee continuity of the second derivative at $\epsilon$. The local solution $u_M$ is a Laplacian eigenfunction with frequency $\sqrt{6\lambda}$, i.e. a linear combination of sines and cosines. For $u$ to be nontrivial, the following matrix 
\begin{equation}
    \begin{pmatrix} u_L(\epsilon) & 0 & -1 & 0 \\ 0 & -\sin\left(\sqrt{6\lambda}\left(1-2\epsilon\right)\right) & -\cos\left(\sqrt{6\lambda}\left(1-2\epsilon\right)\right) & u_L(\epsilon) \\ u_L'(\epsilon) & -\sqrt{6\lambda} & 0 & 0 \\ 0 & -\sqrt{6\lambda}\cos\left(\sqrt{6\lambda}\left(1-2\epsilon\right)\right) & \sqrt{6\lambda}\sin\left(\sqrt{6\lambda}\left(1-2\epsilon\right)\right) & -u_L'(\epsilon)\end{pmatrix} \nonumber
\end{equation}
must have zero determinant. This condition equates eigenvalues to roots of the desired transcendental equation.
\end{proof}

\quad In combination, Proposition \ref{cor:est} and Proposition \ref{prop:DetEq} prove the following first order estimate for the eigenvalues of $D_\epsilon$ on the interval.

\begin{cor}\label{prop:Est1}
    Let $u\in C^2([0,1])$ satisfy (\ref{eq:PDE}) on $[0,1]$ with eigenvalue $\lambda=\lambda(\epsilon)$. If $\lim_{\epsilon\to 0}|\lambda|\epsilon^2=0,$ then there exist positive constants $C,\epsilon_0$ such that for all $0<\epsilon\leq\epsilon_0$,
    \begin{equation}
        \left|\sin\left(\sqrt{6\lambda}\right)\left(\left(21-12\sqrt{3}\right)\lambda-6\right)+2\sqrt{6\lambda}\left(3-2\sqrt{3}\right)\cos\left(\sqrt{6\lambda}\right)\right|\leq C\epsilon \left|\lambda\right|^{\frac{3}{2}}. \nonumber
    \end{equation}
\end{cor}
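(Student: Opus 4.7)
The plan is to combine Proposition \ref{lem:0e}, Proposition \ref{cor:est}, and Proposition \ref{prop:DetEq}, keeping careful track of error terms. By Proposition \ref{lem:0e}, the requirement $u\in C^2([0,1])$ forces the left local solution $u_L$ to be a scalar multiple of $u_{\rm Neu}$, so after a harmless normalization I take $u_L = u_{\rm Neu}$. Proposition \ref{cor:est} then supplies
\[
u_L(\epsilon) = 1 + r_0, \qquad u_L'(\epsilon) = (3-2\sqrt{3})\lambda + r_1,
\]
with $|r_0|\leq C\epsilon|\lambda|$ and $|r_1|\leq C\epsilon|\lambda|(1+\epsilon|\lambda|)$. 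These are the only facts about the boundary layer I will need.

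Next I would directly expand the two quantities appearing in the transcendental equation of Proposition \ref{prop:DetEq}. Squaring and multiplying out gives
\[
u_L'(\epsilon)^2 - 6\lambda u_L(\epsilon)^2 = \lambda\bigl((21-12\sqrt{3})\lambda - 6\bigr) + R_1, \qquad u_L(\epsilon)u_L'(\epsilon) = (3-2\sqrt{3})\lambda + R_2,
\]
where $R_1, R_2$ are explicit polynomials in $\lambda, r_0, r_1$. Using $|\lambda|\epsilon^2 = o(1)$ to dominate all $(\epsilon|\lambda|)^2$ cross-terms by $\epsilon|\lambda|$, I obtain $|R_1|\leq C\epsilon|\lambda|^2$ and $|R_2|\leq C\epsilon|\lambda|$. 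In parallel, since $\sqrt{|\lambda|}\epsilon\to 0$, the mean value theorem gives
\[
\sin\bigl(\sqrt{6\lambda}(1-2\epsilon)\bigr) = \sin\bigl(\sqrt{6\lambda}\bigr) + O(\epsilon\sqrt{|\lambda|}), \qquad \cos\bigl(\sqrt{6\lambda}(1-2\epsilon)\bigr) = \cos\bigl(\sqrt{6\lambda}\bigr) + O(\epsilon\sqrt{|\lambda|}).
\]
Substituting both sets of expansions into the identity of Proposition \ref{prop:DetEq} and dividing by $\lambda$ separates out the leading expression stated in the corollary. The remaining error is a sum of four contributions: $\sin(\sqrt{6\lambda}) R_1/\lambda = O(\epsilon|\lambda|)$, $\sqrt{6\lambda}\cos(\sqrt{6\lambda}) R_2/\lambda = O(\epsilon\sqrt{|\lambda|})$, and the two trigonometric Taylor remainders acting on the respective leading factors, which give $O(\epsilon\sqrt{|\lambda|})\cdot(|\lambda|+1)$ and $O(\sqrt{|\lambda|})\cdot O(\epsilon\sqrt{|\lambda|})$. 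Each of these is bounded by $C\epsilon|\lambda|^{3/2}$ for $\epsilon$ sufficiently small and $\lambda$ bounded away from zero (as is the case for any fixed nonzero eigenvalue in the discrete spectrum identified by Theorem \ref{thm:1d}).

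The main obstacle will not be any single estimate but the bookkeeping of the error terms. Because the hypothesis only forces $|\lambda|\epsilon^2\to 0$, the product $\epsilon|\lambda|$ need not be bounded, so one must verify that every cross-term arising from squaring $u_L'(\epsilon)$, multiplying $u_L(\epsilon)u_L'(\epsilon)$, and substituting the trigonometric Taylor expansions truly falls below the $\epsilon|\lambda|^{3/2}$ threshold after division by $\lambda$. The sharpest such term comes from the cross product $2(3-2\sqrt{3})\lambda\, r_1$ in $u_L'(\epsilon)^2$, which is exactly on the order $\epsilon|\lambda|^2$ and, after dividing by $\lambda$, matches the claimed bound. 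This delicate cancellation, rather than any deep analytical insight, is the technical heart of the argument.
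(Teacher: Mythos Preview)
Your approach is exactly what the paper does: it states that the corollary follows by combining Proposition~\ref{cor:est} with Proposition~\ref{prop:DetEq}, and you have supplied the bookkeeping the paper omits.

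One slip worth correcting: your bound $|R_2|\le C\epsilon|\lambda|$ is too optimistic. Since $R_2$ contains the term $(3-2\sqrt{3})\lambda\, r_0$ with $|r_0|\le C\epsilon|\lambda|$, the correct estimate is $|R_2|\le C\epsilon|\lambda|^2$ (for $|\lambda|\ge 1$). Similarly, the claim that $(\epsilon|\lambda|)^2$ terms are dominated by $\epsilon|\lambda|$ is not justified, since $\epsilon|\lambda|$ need not be bounded under the hypothesis $\epsilon^2|\lambda|\to 0$. Fortunately neither slip affects the conclusion: with the corrected bound you get $\sqrt{|\lambda|}\,|R_2|/|\lambda|\le C\epsilon|\lambda|^{3/2}$, which is precisely the target, and the higher-order cross terms such as $r_1^2$ and $\epsilon^2|\lambda|^3$ are controlled directly via $(\epsilon^2|\lambda|)^{3/2}\to 0$ and $\epsilon|\lambda|^{1/2}=(\epsilon^2|\lambda|)^{1/2}\to 0$ rather than by comparison to $\epsilon|\lambda|$. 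With these adjustments the argument goes through as written.
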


\quad By taking $\lambda(\epsilon)$ smaller than $\epsilon^{-\rho}$ for any $\rho<\frac{2}{3}$ and letting $\epsilon$ vanish, we close the bootstrapping argument and recover a complete description of the eigenvalues of the limiting operator. Note that the error increases further in the spectrum. The eigenvalues also share a variational interpretation under the requirement that each eigenfunction be $C^2([0,1])$. If we define the energy functional
\begin{equation}\label{eq:E1}
    E(f)= \frac{\displaystyle\int_{x_0}^{1-x_0} p(x)\left|f'(x)\right|^2dx}{\displaystyle\int_{x_0}^{1-x_0} w(x) \left|f(x)\right|^2 dx} 
\end{equation}
over all $f$ for which $E(f)$ is finite, then the minimizers of $E$ on orthogonal subspaces of $L^2\left(M_+, w\right)$ are eigenfunctions of (\ref{eq:ODE}) with Neumann boundary conditions \cite[Ch. $6$]{CH} in the sense that $$\lim_{x\to x_0}p(x)f'(x)=0.$$ In this manner, the eigenvalues satisfy an infinite-dimensional min-max principle analogous to the Rayleigh min-max principle for matrices. This proves Theorem \ref{thm:Energy} on the interval.

\begin{rem}
    In \cite[Sec. $7$]{WW}, Wu and Wu introduce the clipped LLE matrix, a useful tool for numerically estimating eigenfunctions of the Dirichlet Laplacian on manifolds in which the elliptic region (\ref{eq:OmegaPM}) is known a priori. By Proposition \ref{lem:0e}, this corresponds directly to setting $u_L(x)=u_{\rm{Dir}}(x)$ in the left boundary layer. In combination, Propositions \ref{cor:est} and \ref{prop:DetEq} provide estimates for eigenvalue convergence of the clipped LLE matrix.
\end{rem}

\section{LLE on the Disc}\label{sec:2}

\quad In this section, we study eigenpairs $(\lambda, u)$ satisfying (\ref{eq:PDE}) on the unit disc $M=\overline{B_1(0)}$. First, we present an explicit expression for $D_\epsilon$ on the disc and find its Sturm-Liouville form. Following the same argument as in Section \ref{sec:1}, we then isolate a unique eigenfunction in the boundary layer and use regularity to determine an exact eigenvalue condition. Finally, we prove Theorem \ref{thm:2d} and present the eigenvalues in a variational format.
  
\subsection{Presentation of the differential operator on the disc.} In this section, we use \cite[Def. $12$]{WW} to explicitly derive the differential operator $D_\epsilon$ on the unit disc. The boundary layer can be written $$M_\epsilon=\{(r,\theta): 1-\epsilon<r\leq1\}$$ with Fermi polar coordinates $(1-r,\theta)$. Due to the radial symmetry, there exist differential operators $\p_{11}^2,\p_{22}^2,\p_2$ such that $D_\epsilon$ can be globally defined on the disc with the form
\begin{equation}\label{eq:DeDef2}
    D_\epsilon f(r,\theta) =\phi_{11}(1-r)\p_{11}^2f(r,\theta)+\phi_{22}(1-r)\p_{22}^2f(r,\theta)+\phi_2(1-r)\p_2f(r,\theta)
\end{equation}
for $f\in C^2\big(\overline{B_1(0)}\big)$. The variable coefficients in (\ref{eq:DeDef2}) are defined in terms of sigma functions that capture the geometric asymmetries of a neighborhood of each point. Each coefficient is constant outside the boundary layer, reducing to
\begin{equation}
    D_\epsilon f(x)=-\frac{1}{8}\Delta f(x) \nonumber
\end{equation}
for $x\in M\backslash M_\epsilon$.

\quad We can write the leading order coefficients as
\begin{align}
    \phi_{11}(t)&=\frac{1}{2}\frac{\sigma_{2,2}(t)\sigma_2(t)-\sigma_3(t)\sigma_{1,2}(t)}{\sigma_{1,2}^2(t)-\sigma_{2,2}(t)\sigma_{0}(t)}  \nonumber \\
    \phi_{22}(t)&=\frac{1}{2}\frac{\sigma_{2,2}^2(t)-\sigma_{3,2}(t)\sigma_{1,2}(t)}{\sigma_{1,2}^2(t)-\sigma_{2,2}(t)\sigma_{0}(t)} \nonumber
\end{align}
and the remaining coefficient as
\begin{equation}
    \phi_2(t)=\frac{\sigma_{1,2}(t)}{\sigma_{1,2}^2(t)-\sigma_{2,2}(t)\sigma_{0}(t)}. \nonumber
\end{equation}
The sigma functions can be derived explicitly for a given dimension and have a common geometric interpretation, as discussed in \cite[Def. $7$]{WW}. In dimension $2$, they are defined as follows:
{\allowdisplaybreaks
\begin{align}
    \sigma_{0}(t)&=\begin{cases} \frac{\pi}{2}+\frac{t}{\epsilon}\sqrt{1-\left(\frac{t}{\epsilon}\right)^2}+\arcsin\left(\frac{t}{\epsilon}\right), & 0\leq t\leq \epsilon \\ \pi, & \textrm{otherwise} \end{cases} \nonumber
    \\
    \sigma_{1,2}(t)&=\begin{cases} -\frac{2}{3}\left(1-\left(\frac{t}{\epsilon}\right)^2\right)^{\frac{3}{2}}, & 0\leq t\leq \epsilon \\ 0, & \textrm{otherwise} \end{cases} \nonumber
    \\
    \label{eq:sigma}
    \begin{split}
    \sigma_2(t)&=\begin{cases} \frac{\pi}{8}+\frac{1}{12}\left(\frac{t}{\epsilon}\sqrt{1-\left(\frac{t}{\epsilon}\right)^2}\left(5-2\left(\frac{t}{\epsilon}\right)^2\right)+3\arcsin\left(\frac{t}{\epsilon}\right)\right), & 0\leq t\leq \epsilon \\ \frac{\pi}{4}, & \textrm{otherwise} \end{cases} 
    \\
    \sigma_{2,2}(t)&=\begin{cases} \frac{\pi}{8}+\frac{1}{4}\left(\frac{t}{\epsilon}\sqrt{1-\left(\frac{t}{\epsilon}\right)^2}\left(2\left(\frac{t}{\epsilon}\right)^2-1\right)+\arcsin\left(\frac{t}{\epsilon}\right)\right), & 0\leq t\leq \epsilon \\ \frac{\pi}{4}, & \textrm{otherwise} \end{cases} 
    \end{split}
    \\
    \sigma_3(t)&=\begin{cases} -\frac{2}{15}\left(1-\left(\frac{t}{\epsilon}\right)^2\right)^{\frac{5}{2}}, & 0\leq t\leq \epsilon \\ 0, & \textrm{otherwise} \end{cases} \nonumber
    \\
    \sigma_{3,2}(t)&=\begin{cases} -\frac{2}{15}\left(2+3\left(\frac{t}{\epsilon}\right)^2\right)\left(1-\left(\frac{t}{\epsilon}\right)^2\right)^{\frac{3}{2}}, & 0\leq t\leq \epsilon \\ 0, & \textrm{otherwise}. \end{cases} \nonumber
\end{align}
}

\quad Notice that each sigma function above is analytic on $(0,\epsilon)$ but cannot extend analytically beyond $t=\epsilon$. This contrasts the leading coefficient of $D_\epsilon$ on the interval, as presented in Section \ref{sec:1}. While these sigma functions are crucial for the numerical analysis in Section \ref{sec:3}, we only use a few key properties in this section. The following statement follows directly from (\ref{eq:sigma}) and \cite[Prop. $13$]{WW}.

\begin{prop} \label{prop:phi1phi2V}
For small enough $\epsilon$, the variable coefficients in (\ref{eq:DeDef2}) have the following properties. 
    \begin{enumerate}[label=\arabic*.]
        \item $\phi_{11}(t)$ is negative  for all $t\geq 0$ and $\phi_{11}(t)=-\frac{1}{8}$ for $t\geq \epsilon$.

        \item $\phi_{22}(t)=-\frac{1}{8}$ for $t\geq\epsilon$.
        \item $\phi_{22}(0)>0$ and there exists a unique $r_0\in(0,\epsilon)$ such that $\phi_{22}(r_0)=0$.
        \item $\phi_{22}(t)$ vanishes linearly at $r_0$, in the sense that $\phi_{22}'(r_0)\neq 0$.

        \item $\phi_2(t)$ is nonnegative for all $t\geq0$ and $\phi_2(t)=0$ for $t\geq\epsilon$.
    \end{enumerate}
    Additionally, each coefficient is continuous and uniformly bounded on $[0,1]$ and analytic except at $t=\epsilon$.
\end{prop}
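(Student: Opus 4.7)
The plan is to exploit the scaling invariance built into (\ref{eq:sigma}). Each sigma function depends on $t$ only through the ratio $s := t/\epsilon$ on $[0,\epsilon]$ and is constant thereafter, so the three coefficients $\phi_{11}, \phi_{22}, \phi_2$ reduce to fixed analytic functions of $s \in [0,1]$ that are independent of $\epsilon$. In particular, the zero $r_0$ of claims (3)--(4) must take the form $r_0 = s_0\epsilon$ for some $\epsilon$-independent $s_0 \in (0,1)$, and every assertion reduces to a pointwise verification on $[0,1]$.

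First I would handle the ``outside the boundary layer'' statements: plugging $s=1$ into (\ref{eq:sigma}) gives $\sigma_0 = \pi$, $\sigma_2 = \sigma_{2,2} = \pi/4$, and $\sigma_{1,2} = \sigma_3 = \sigma_{3,2} = 0$, and substitution yields $\phi_{11} = \phi_{22} = -\tfrac{1}{8}$ and $\phi_2 = 0$ for all $t \geq \epsilon$. Next I would show the common denominator $D(s) := \sigma_{1,2}^2(s) - \sigma_{2,2}(s)\sigma_0(s)$ is strictly negative on $[0,1]$: since $\sigma_{1,2}^2 = \tfrac{4}{9}(1-s^2)^3$ is decreasing, while direct differentiation gives $\sigma_0'(s) = 2\sqrt{1-s^2}$ and $\sigma_{2,2}'(s) = 2s^2\sqrt{1-s^2}$ (so that $\sigma_0, \sigma_{2,2}$ are positive and increasing), $D$ is strictly decreasing and $D(0) = \tfrac{4}{9} - \tfrac{\pi^2}{16} < 0$ gives the claim.

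With $D < 0$ in hand, claims (1), (2), and (5) are sign chases. For $\phi_{11}$, the numerator $\sigma_{2,2}\sigma_2 - \sigma_3\sigma_{1,2}$ equals $\sigma_{2,2}\sigma_2 - \tfrac{4}{45}(1-s^2)^4$, which is positive since monotonicity gives $\sigma_{2,2}\sigma_2 \geq \tfrac{\pi^2}{64} > \tfrac{4}{45}$; dividing by the negative $D$ yields $\phi_{11} < 0$. For $\phi_2 = \sigma_{1,2}/D$, the numerator $\sigma_{1,2} \leq 0$ vanishes exactly at $s = 1$, so $\phi_2 \geq 0$ with the stated zero set. For (3), set $N(s) := \sigma_{2,2}^2(s) - \sigma_{3,2}(s)\sigma_{1,2}(s)$ and compute $N(0) = \tfrac{\pi^2}{64} - \tfrac{8}{45} < 0$ and $N(1) = \tfrac{\pi^2}{16} > 0$; thus $\phi_{22}(0) = \tfrac{1}{2} N(0)/D(0) > 0$, and the intermediate value theorem produces at least one zero of $N$ in $(0,1)$.

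The main obstacle is uniqueness of $s_0$ together with the nondegeneracy $\phi_{22}'(r_0) \neq 0$ in (4). Both reduce to showing $N'(s) > 0$ on $(0,1)$, which I would obtain by checking that the two pieces of $N$ are oppositely monotone. Using $\sigma_{2,2}'(s) = 2s^2\sqrt{1-s^2}$, the function $\sigma_{2,2}^2$ is strictly increasing on $(0,1)$; and the product $\sigma_{3,2}(s)\sigma_{1,2}(s) = \tfrac{4}{45}(2 + 3s^2)(1-s^2)^3$ has derivative $-\tfrac{8}{15}s(1+4s^2)(1-s^2)^2$, which is strictly negative on $(0,1)$. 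Hence $N' > 0$ on $(0,1)$, so $s_0$ is unique, and the chain rule gives $\phi_{22}'(r_0) = N'(s_0)/(2\epsilon\, D(s_0)) \neq 0$. Continuity, uniform boundedness, and analyticity away from $t = \epsilon$ follow from the closed-form expressions together with $D$ being bounded away from $0$ on $[0,1]$.
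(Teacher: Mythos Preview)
Your argument is correct; the paper itself does not supply a proof but simply asserts that the proposition ``follows directly from (\ref{eq:sigma}) and \cite[Prop.~$13$]{WW},'' so your explicit verification via the rescaling $s=t/\epsilon$ is exactly the computation the paper defers. The key steps---negativity of the common denominator $\sigma_{1,2}^2-\sigma_{2,2}\sigma_0$, the sign analysis of each numerator, and the strict monotonicity of $N(s)=\sigma_{2,2}^2-\sigma_{3,2}\sigma_{1,2}$ giving uniqueness and linear vanishing at $s_0$---are all sound as written.
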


\quad Although each function in Proposition \ref{prop:phi1phi2V} can be written explicitly using (\ref{eq:sigma}), the featured properties are sufficient for proving Theorem \ref{thm:2d}. Because $\phi_{22}(1-r)$ vanishes in the boundary layer, the elliptic and hyperbolic regions (\ref{eq:OmegaPM}) are separated by a circle of radius $1-r_0$, where $r_0$ is the unique root of $\phi_{22}$.

\quad To present $D_\epsilon$ in its entirety, we lastly need to present the differential operators $\p_{11}^2, \p_{22}^2, \p_2$ in (\ref{eq:DeDef2}). We define
\begin{equation}\label{eq:NormalCoord}
    \p_{11}^2=\frac{1}{r}\frac{\p}{\p r}+\frac{1}{r^2}\frac{\p^2}{\p\theta^2}, \quad \p_{22}^2=\frac{\p^2}{\p r^2}, \quad \textrm{and} \quad \p_2=\frac{\p}{\p r} 
\end{equation}
so that (\ref{eq:DeDef2}) verifies the pointwise construction featured in \cite[Def. $12$]{WW}. Namely, for a fixed $x$ in the boundary layer,
\begin{equation}
    \p_if(x)=\frac{d}{dt}\left(f\circ \gamma_i\right)(0) \nonumber
\end{equation}
where $\gamma_i(t)$ is a geodesic satisfying $\gamma_i(0)=x$.
In particular, $\p_{11}^2+\p_{22}^2$ is the polar Laplacian and $\p_2$ is oriented towards the boundary. Equations (\ref{eq:sigma}) and (\ref{eq:NormalCoord}) provide the polar representation of $D_\epsilon$. 

\quad The fact that $D_\epsilon$ degenerates follows from the properties of $\phi_{22}$ in Proposition \ref{prop:phi1phi2V} and is more clear by writing (\ref{eq:PDE}) in the separable Sturm-Liouville form
\begin{equation}\label{eq:SL2}
    -\frac{\p}{\p r}\left(p(1-r)\frac{\p u}{\p r}(r,\theta)\right)+q(1-r)\frac{\p^2u}{\p\theta^2}(r,\theta)=\lambda w(1-r)u(r,\theta).
\end{equation}
Here the principal coefficient $p$ is defined as
\begin{equation}\label{eq:p2}
    p(1-r)=\frac{1}{8}(1-\epsilon)\sgn\left(1-r_0-r\right)\exp\left(\int_{1-\epsilon}^{r} \frac{\phi_{11}(1-s)+s\phi_2(1-s)}{s\phi_{22}(1-s)} ds\right) 
\end{equation}
which is sign-changing and vanishes when $r=1-r_0$. The remaining coefficients are defined as
\begin{align}
    q(1-r)&=-\frac{p(1-r)}{r^2}\left(\frac{\phi_{11}(1-r)}{\phi_{22}(1-r)}\right) \nonumber \\ w(1-r)&=-\frac{p(1-r)}{\phi_{22}(1-r)}. \nonumber
\end{align}
The prefactor in (\ref{eq:p2}) is chosen so that the weight $w$ is equal to $r$ outside the boundary layer. Note that (\ref{eq:SL2}) is not necessary for the numerical comparison in Section \ref{sec:3}, but it helps establish the variational interpretation of the eigenvalues. The following statement provides a few key details regarding these coefficients.

\begin{prop}\label{prop:SL2properties}
    For small enough $\epsilon$, the Sturm-Liouville coefficients have the following properties. The principal coefficient $p$ satisfies:
    \begin{enumerate}[label=\arabic*.]
        \item $p(1-r)$ is continuous and $\frac{1}{p}(1-r)$ is in $L^1(\delta,1)$ for any $\delta>0$.
        \item $p(1-r)$ is positive for $r\in(0,1-r_0)$, negative for $r\in(1-r_0,1]$ and zero when $r=0$ or $r=1-r_0$.
        \item $p(1-r)$ is asymptotic to $\sgn(1-r_0-r)\left(r-1+r_0\right)^{\zeta}$ as $r\to 1-r_0$ with $\zeta=\frac{\phi_{11}(r_0)+(1-r_0)\phi_2(r_0)}{(1-r_0)\phi_{22}'(r_0)}$.
    \end{enumerate}
    The coefficient $q$ satisfies:
    \begin{enumerate}[label=\arabic*.]
        \item $q(1-r)$ is continuous except at $1-r_0$ and in $L^1(\delta,1)$ for any $\delta>0$.
        \item $q(1-r)$ is negative for all $r\in(0,1)$.
        \item $q(1-r)\to-\infty$ as $r\to 1-r_0$.
    \end{enumerate}
    The weight $w$ satisfies:
    \begin{enumerate}[label=\arabic*.]
        \item $w(1-r)$ is continuous except at $1-r_0$ and in $L^1(0,1)$.
        \item $w(1-r)$ is positive for all $r\in(0,1)$.
        \item $w(1-r)\to\infty$ as $r\to 1-r_0$.
    \end{enumerate}
    For all $r\in(0,1)$, $p(1-r)$ converges pointwise to $\frac{r}{8}$, $q(1-r)$ converges pointwise to $-\frac{1}{8r}$, and $w(1-r)$ converges pointwise to $r$ as $\epsilon\to 0$.
\end{prop}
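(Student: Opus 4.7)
The plan is to split the analysis according to whether $r$ lies in the interior $(0, 1-\epsilon]$ or in the boundary layer $(1-\epsilon, 1]$. On the interior, Proposition \ref{prop:phi1phi2V} tells us the variable coefficients $\phi_{11},\phi_{22},\phi_2$ take their constant values $-1/8$, $-1/8$, $0$, respectively, so the integrand in (\ref{eq:p2}) collapses to $1/s$. A direct computation then gives $p(1-r) = r/8$ on $(0,1-\epsilon]$, and the defining formulas yield $q(1-r) = -1/(8r)$ and $w(1-r) = r$. All claimed properties of $p, q, w$ on this region follow immediately, and the pointwise convergence as $\epsilon \to 0$ is immediate, since every fixed $r \in (0,1)$ lies in the interior once $\epsilon < 1-r$.

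For the boundary layer, I would analyze the integrand $(\phi_{11}(1-s) + s\phi_2(1-s))/(s\phi_{22}(1-s))$ using Proposition \ref{prop:phi1phi2V}. Since the numerator is analytic and bounded, $s$ is bounded away from zero, and $\phi_{22}(1-s)$ is analytic and vanishes linearly at $s = 1-r_0$, the integrand is analytic on $(1-\epsilon, 1) \setminus \{1-r_0\}$ with a simple pole at $s = 1-r_0$. Taylor expanding $\phi_{22}(1-s) = -\phi_{22}'(r_0)(s - (1-r_0)) + O((s-(1-r_0))^2)$ and computing the residue shows that the integral in (\ref{eq:p2}) behaves like a logarithm with coefficient matching the $\zeta$ in the proposition's statement. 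Exponentiating yields the claimed power-law asymptotic for $p$ near $1-r_0$; continuity of $p$ on $[0,1] \setminus \{1-r_0\}$ is then immediate from analyticity of the integrand away from the pole, and the sign properties of $p$ come directly from the factor $\sgn(1-r_0-r)$.

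The properties of $q$ and $w$ follow from the defining relations $w = -p/\phi_{22}$ and $q = -p\phi_{11}/(r^2 \phi_{22})$. The blow-up rates at $1-r_0$ are obtained by dividing the power-law asymptotic of $p$ by the linear vanishing of $\phi_{22}$, and continuity away from $1-r_0$ is inherited from $p$ and $\phi_{22}$. For the sign statements, I would combine three facts: $\phi_{11} < 0$ everywhere by Proposition \ref{prop:phi1phi2V}.1, and within the boundary layer $\sgn(p)$ is opposite to $\sgn(\phi_{22}(1-r))$ (with $p > 0$ and $\phi_{22}(1-r) < 0$ for $r \in (1-\epsilon, 1-r_0)$, and the reverse for $r \in (1-r_0, 1)$), so that $p/\phi_{22} < 0$ forces $w > 0$ and $q < 0$. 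The $L^1$ statements near $1-r_0$ reduce to checking that the relevant power-law exponents are greater than $-1$, which follows from the explicit formula for $\zeta$.

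The step I expect to be the most delicate is the sign bookkeeping around the interior singularity $s = 1-r_0$: verifying that the signed exponent produced by the residue calculation makes $p$ vanish (rather than blow up) at $1-r_0$, and that $q$ and $w$ instead blow up, requires tracking the signs of $\phi_{11}(r_0)$, $\phi_2(r_0)$, and $\phi_{22}'(r_0)$ through the expansion and reconciling them with the global sign factor $\sgn(1-r_0-r)$ in the definition of $p$. The remaining routine verifications—continuity and integrability away from $1-r_0$, and compatibility of the interior and boundary-layer formulas at the matching point $r = 1-\epsilon$—all follow from the same expansion together with the explicit interior formulas.
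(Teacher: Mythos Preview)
The paper states Proposition~\ref{prop:SL2properties} without proof, treating it as a direct consequence of the explicit formulas \eqref{eq:p2} for $p,q,w$ together with the coefficient properties recorded in Proposition~\ref{prop:phi1phi2V} (just as Proposition~\ref{prop:SLproperties} on the interval is stated without proof). Your verification strategy---explicit computation on the interior $r\le 1-\epsilon$ where the integrand in \eqref{eq:p2} collapses to $1/s$, residue analysis of the simple pole at $s=1-r_0$ in the boundary layer to extract the power-law exponent $\zeta$, and then reading off the properties of $q$ and $w$ from their defining ratios---is exactly the natural way to supply the omitted details, and the outline is correct.

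One technical point worth making explicit: for $r>1-r_0$ the integral in \eqref{eq:p2} formally passes through the simple pole and is not a convergent Lebesgue integral. The formula should be read as defining $p$ via the integrating-factor relation $p'/p=(\text{integrand})$ separately on each component of $(1-\epsilon,1)\setminus\{1-r_0\}$, with the prefactor $\sgn(1-r_0-r)$ chosen precisely so that the two one-sided power laws $|r-(1-r_0)|^{\zeta}$ glue to a continuous function vanishing at $1-r_0$. Your flagged ``delicate step'' of sign bookkeeping is where this is resolved: once you confirm from Proposition~\ref{prop:phi1phi2V} that $\phi_{22}'(r_0)<0$ and compute the residue, the sign of $\zeta$ (which must be positive and $O(\epsilon)$, matching the indicial analysis in Proposition~\ref{lem:0e2}) determines that $p$ vanishes rather than blows up, and the $L^1$ claims for $1/p$, $q$, and $w$ then reduce to the exponent inequalities $\zeta<1$ and $\zeta-1>-1$, both immediate for small $\epsilon$.
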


\quad The changing sign of $p$ means that $D_\epsilon$ is mixed-type. Under (\ref{eq:OmegaPM}), (\ref{eq:SL2}), and Proposition \ref{prop:SL2properties}, we identify the elliptic region as the set $M_+=\{(r,\theta): p(1-r)>0\}$ and the singular surface $\Gamma=\{(r,\theta): r=1-r_0\}$ as the interface between elliptic and hyperbolic regions. With a form for $D_\epsilon$ established on the disc, we now prove Theorem \ref{thm:2d} following the same argument as in Section \ref{sec:pT1}.

\subsection{Proof of Theorem \ref{thm:2d}.}\label{sec:pT2} In this section, we use Proposition \ref{prop:phi1phi2V} and the polar representation of $D_\epsilon$ to prove Theorem \ref{thm:2d}. As on the interval, we demonstrate how the Frobenius method determines the regularity of eigenfunctions in the boundary layer and provide a method for finding eigenvalues of $D_\epsilon$ under the $C^2$ regularity condition. 

\quad The coefficients in (\ref{eq:DeDef2}) are radial, meaning the eigenfunctions are necessarily separable. Supposing $u$ satisfies (\ref{eq:PDE}) on the unit disc, we decompose the eigenfunction into radial and angular components $$u(\theta,r)=u^\theta(\theta)u^r(r).$$ The eigenvalue equation (\ref{eq:PDE}) then reduces to a system of ordinary differential equations. The radial component $u^r$ is determined by the equation
\begin{equation}\label{eq:rad}
    \phi_{22}(1-r)(u^r)''(r)+\left(\frac{1}{r}\phi_{11}(1-r)+\phi_2(1-r)\right)(u^r)'(r)-\frac{\nu^2}{r^2}\phi_{11}(1-r) u^r(r)=\lambda u^r(r) 
\end{equation}
while up to a phase shift, the angular component $u^\theta$ is a multiple of $\cos(\nu\theta)$. For $u$ to be continuous, we take $\nu$ to be a nonnegative integer. Because the coefficients in (\ref{eq:DeDef2}) are defined piecewise in the radial coordinate, we decompose the radial eigenfunction $u^r$ into an inner and outer solution. More precisely, if we set
\begin{equation}
    u_I^r(r)=u^r(r)\big|_{[0,1-\epsilon]} \quad \textrm{and} \quad u_O^r(r)=u^r(r)\big|_{[1-\epsilon,1]}, \nonumber
\end{equation}
then (\ref{eq:rad}) can be solved on each subinterval, and a global solution can be determined by gluing local solutions together. Note that (\ref{eq:SL2}) is singular near the origin; this serves as an interior boundary condition, forcing the eigenfunction to vanish at the origin whenever $\nu\neq 0$. The inner solution can be solved explicitly, so we begin our analysis with the more complicated outer solution. In the boundary layer, there are two radial solutions to (\ref{eq:rad}), each with a Frobenius series centered at the singularity $1-r_0$. The following result is analogous to Proposition \ref{lem:0e}, describing the behavior of both local solutions near the edge of the boundary layer.

\begin{prop}\label{lem:0e2}
    Let $(\lambda,u)$ satisfy (\ref{eq:SL2}) on the unit disc. The local radial solution $u^r_O(r)=u^r(r)\big|_{[1-\epsilon,1]}$ is a linear combination of two functions that we denote as $u^r_{\rm{Dir}}$ and $u^r_{\rm{Neu}}$. The solution $u_{\rm{Dir}}^r$ satisfies
    \begin{equation}
        u_{\rm{Dir}}^r(1-r_0)=0 \quad \textrm{and} \quad \lim_{r\to 1-r_0}p(1-r)(u_{\rm{Dir}}^r)'(r)=1 \nonumber
    \end{equation}
    while the solution $u^r_{\rm{Neu}}$ satisfies
    \begin{equation}
        u_{\rm{Neu}}^r(1-r_0)=1 \quad \textrm{and} \quad \lim_{r\to 1-r_0}p(1-r) (u_{\rm{Neu}}^r)'(r)=0. \nonumber
    \end{equation}
    In particular, $u_{\rm{Dir}}^r$ is continuous but not $C^1(1-\epsilon,1)$ while $u_{\rm{Neu}}^r$ is $C^\infty(1-\epsilon,1)$.
\end{prop}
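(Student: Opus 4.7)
The proof will follow the blueprint of Proposition \ref{lem:0e}, since the radial equation (\ref{eq:rad}) has a regular singular point at $r = 1 - r_0$ of the same type that (\ref{eq:ODE}) had at $x_0$. The key structural inputs are supplied by Proposition \ref{prop:phi1phi2V}: the coefficient $\phi_{22}(1-r)$ vanishes linearly at $r = 1 - r_0$ with nonzero slope, while $\phi_{11}(1-r)$, $\phi_2(1-r)$ and the zeroth-order coefficient $-\nu^2 \phi_{11}(1-r)/r^2$ are analytic and bounded in a neighborhood of this singularity.

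First, I would shift to the local coordinate $s = r - (1 - r_0)$ and Taylor expand each coefficient of (\ref{eq:rad}) at $s = 0$. Dividing through by $\phi_{22}(1-r)$ brings the equation to the standard form
\begin{equation}
(u^r)''(s) + \frac{A(s)}{s}(u^r)'(s) + \frac{B(s)}{s}u^r(s) = 0, \nonumber
\end{equation}
with $A, B$ analytic at $s = 0$. A direct computation pairs the value $A(0)$ with the exponent $\zeta$ appearing in Proposition \ref{prop:SL2properties}(3), so the indicial polynomial factors as $P(\alpha) = \alpha[\alpha - (1+\zeta)]$ with roots $\alpha = 0$ and $\alpha = 1 + \zeta$. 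Because $\phi_{22}'(r_0) = O(\epsilon^{-1})$ while $\phi_{11}(r_0)$ and $\phi_2(r_0)$ remain $O(1)$, we have $\zeta = O(\epsilon)$, so the two roots differ by a non-integer for all sufficiently small $\epsilon$. The classical Frobenius theorem then produces two linearly independent local solutions of the form $s^\alpha \sum_{j \geq 0} c_j s^j$, with the coefficients $\{c_j\}$ determined by a recurrence analogous to (\ref{eq:RecRel}).

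Taking $\alpha = 0$ and normalizing $c_0 = 1$ yields an analytic solution, which I would call $u_{\rm{Neu}}^r$; its derivative is bounded at $s = 0$, so multiplication by $p(1-r)$, which vanishes there by Proposition \ref{prop:SL2properties}, forces $p(1-r)(u_{\rm{Neu}}^r)'(r) \to 0$. Taking $\alpha = 1 + \zeta$ gives $u_{\rm{Dir}}^r(s) = c_0\, s^{1+\zeta}(1 + O(s))$, whence $(u_{\rm{Dir}}^r)'(s) = (1+\zeta) c_0\, s^\zeta (1 + O(s))$; combining with the asymptotic for $p$ in Proposition \ref{prop:SL2properties}(3) yields that $p(1-r)(u_{\rm{Dir}}^r)'(r)$ admits a finite nonzero limit, which is normalized to $1$ by rescaling $c_0$. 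The fractional exponent $1 + \zeta \notin \mathbb{Z}$ for small $\epsilon$ forces $u_{\rm{Dir}}^r$ to be continuous but not $C^1$ at $s = 0$, while the analyticity of the $\alpha = 0$ series gives $u_{\rm{Neu}}^r \in C^\infty$ on $(1 - \epsilon, 1)$.

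The principal obstacle, as in the interval case, is to secure an inductive bound on the Frobenius coefficients certifying that each series converges on the full radial interval $[1-\epsilon, 1]$ uniformly in $\epsilon$. The denominators $P(\alpha + j)$ remain nonzero for $j \geq 1$ once $|\zeta|$ is small, but one must still control them against the Taylor coefficients of the sigma functions in (\ref{eq:sigma}), which scale like inverse powers of $\epsilon$. This is a direct adaptation of the argument behind (\ref{eq:bjbd}) and the inductive estimates in the proof of Proposition \ref{cor:est}, requiring no new ideas beyond Taylor-expanding the sigma functions explicitly at $r = 1 - r_0$ and bookkeeping.
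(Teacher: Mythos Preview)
Your proposal is correct and follows essentially the same Frobenius-method argument as the paper: expand the analytic coefficients of (\ref{eq:rad}) at the regular singular point $r=1-r_0$, compute the indicial roots $0$ and $1-b_0/a_1=1+O(\epsilon)$ (which differ by a non-integer for small $\epsilon$), and read off the boundary behavior of each branch using the asymptotic for $p$ in Proposition \ref{prop:SL2properties}(3). Your final paragraph on uniform-in-$\epsilon$ bounds for the Frobenius coefficients goes beyond what this proposition requires---the paper obtains convergence of the series from analyticity of the coefficients on $(0,\epsilon)$ alone and defers quantitative estimates to the disc analog of Proposition \ref{cor:est}---but otherwise the two arguments coincide.
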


\quad The notation for each basis function stems from their behavior at the singularity $1-r_0$. The condition that $u\in C^2\big(\overline{B_1(0)}\big)$ discards any contribution from the Dirichlet solution, and therefore the outer solution $u_O^r$ is precisely the Neumann solution.

\begin{proof}[Proof of Proposition \ref{lem:0e2}.] Under the transformation $r\to 1-r$, we study (\ref{eq:rad}) on $[0,\epsilon]$. Let $r_0$ be the unique zero of $\phi_{22}$ in this region, introduced in Proposition \ref{prop:phi1phi2V}. Each variable coefficient in (\ref{eq:rad}) is analytic on $(0,\epsilon)$ and has a series representation centered at $r_0$. For the sake of notation, we write
\begin{align}
    \phi_{22}(r)&=\sum_{j=1}^\infty a_j(r-r_0)^j \nonumber \\
    \frac{1}{1-r}\phi_{11}(r)+\phi_2(r)&=\sum_{j=0}^\infty b_j(r-r_0)^j \nonumber \\
    \frac{1}{(1-r)^2}\phi_{11}(r)&=\sum_{j=0}^\infty d_j(r-r_0)^j \nonumber
\end{align}
where $a_1,b_0,$ and $d_0$ are all nonzero. By (\ref{eq:sigma}), each coefficient $a_j, b_j, d_j$ scales with $\epsilon^{-j}$. Any local solution to (\ref{eq:rad}) on $(0,\epsilon)$ takes the form of a Frobenius series
\begin{equation}\label{eq:Frob2}
    u(1-r)=\sum_{j=0}^\infty c_j(r-r_0)^{j+\alpha}
\end{equation}
for $\alpha\in\mathbb{R}$. The indicial polynomial 
\begin{equation}
    P(\alpha)= a_1\alpha(\alpha-1)+b_0\alpha \nonumber
\end{equation}
has two roots at $0$ and $1-\frac{b_0}{a_1}$. By Proposition \ref{prop:phi1phi2V}, $\frac{b_0}{a_1}$ is bounded by a multiple of $\epsilon$ and therefore these roots differ by a non-integer for small enough $\epsilon$. Hence there are two linearly independent solutions to (\ref{eq:rad}) near the boundary that take the form (\ref{eq:Frob2}). Without loss of generality, we initially take $c_0=1$ for both solutions. For $j\geq 1$, the remaining Frobenius coefficients can be determined through the recurrence relation 
\begin{equation}\label{eq:Rec2}
    P(\alpha+j)c_j=\lambda c_{j-1}-\sum_{k=0}^{j-1}c_k \left((k+\alpha)(k+\alpha-1)a_{j-k+1}+(k+\alpha)b_{j-k}-\nu^2d_{j-k-1}\right). 
\end{equation}
\quad When $\alpha=1-\frac{b_0}{a_1}$, we denote the series (\ref{eq:Frob2}) as the Dirichlet solution so that $u^r_{\rm{Dir}}(1-r_0)=0$. Proposition \ref{prop:SL2properties} implies that the quasi-derivative $p(1-r)(u^r_{\rm{Dir}})'(r)$ has a finite, nonzero limit as $r$ approaches $1-r_0$. When $\alpha=0$, we denote the series (\ref{eq:Frob2}) as the Neumann solution so that $p(r_0)(u^r_{\rm{Neu}})'(1-r_0)=0$. By scaling both solutions appropriately, we have the desired result.
\end{proof}

\quad Proposition \ref{lem:0e2} states that the regularity condition $u\in C^2\big(\overline{B_1(0)}\big)$ is equivalent to imposing quasi-Neumann boundary conditions on the elliptic region, just as in Section \ref{sec:1}. Similarly, the spectrum of $D_\epsilon$ then consists entirely of eigenvalues satisfying
\begin{equation}\label{eq:RRQuotient2}
    \lambda=\frac{\displaystyle\int_{0}^{1-r_0} p(1-r)\left|\left(u^r\right)'(r)\right|^2 dr-\nu^2\int_0^{1-r_0}q(1-r)\left|u^r(r)\right|^2 dr}{\displaystyle\int_0^{1-r_0}w(1-r)\left|u^r(r)\right|^2 dr}
\end{equation}
for $u^r$ satisfying (\ref{eq:rad}). By Proposition \ref{prop:phi1phi2V} alongside (\ref{eq:RRQuotient2}), the eigenvalues are real and nonnegative and the eigenfunctions are orthogonal with respect to the $L^2\left(M_+,w\right)$ inner product. Because $w(1-r)$ converges pointwise to $r$ as $\epsilon\to 0$, this corresponds to the fact that the LLE matrix is symmetric and real in the dimensional limit. Further, when $\nu=0$, (\ref{eq:RRQuotient2}) admits a zero eigenvalue, for which the Neumann solution $u_{\rm{Neu}}^r$ is a constant. By (\ref{eq:Pconv}), this verifies that constant vectors are eigenvectors of the LLE matrix and completes the proof of Theorem \ref{thm:2d}.

\quad As in Section \ref{sec:1}, determining eigenvalues via (\ref{eq:RRQuotient2}) would prove difficult. Instead, we rely on the pointwise estimates in Proposition \ref{lem:0e2} and the continuity conditions at the boundary layer. This results in the following statement, which identifies eigenvalues as roots of a transcendental equation.

\begin{prop}\label{prop:DetEq2}
    Let $(\lambda,u)\in[0,\infty)\times C^2\big(\overline{B_1(0)}\big)$ satisfy (\ref{eq:SL2}) with $\nu\in\mathbb{N}_0$. If $u_O^r(r)=u^r(r)\big|_{[1-\epsilon,1]}$ and $\epsilon$ is sufficiently small, then the eigenvalue $\lambda$ satisfies
    \begin{equation}
        J_\nu\left(\sqrt{8\lambda}(1-\epsilon)\right)(u_O^r)'(1-\epsilon)-\sqrt{8\lambda} \: J_\nu'\left(\sqrt{8\lambda}(1-\epsilon)\right)u_O^r(1-\epsilon)=0. \nonumber
    \end{equation}
    Here $J_\nu$ denotes a Bessel function of the first kind.
\end{prop}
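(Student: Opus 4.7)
The plan is to separate variables, solve the radial equation explicitly on the interior region via a Bessel function, and then impose the $C^2$ matching conditions at the transition circle $r = 1-\epsilon$. As in the paragraph preceding the proposition, I would write $u(r,\theta) = u^r(r)u^\theta(\theta)$ with $u^\theta$ a multiple of $\cos(\nu\theta)$, so that $u^r$ satisfies (\ref{eq:rad}). Splitting the radial domain at $r = 1-\epsilon$ produces the inner piece $u_I^r$ on $[0,1-\epsilon]$ and the outer piece $u_O^r$ on $[1-\epsilon,1]$, and an eigenfunction is built by gluing these two local solutions consistently with the regularity $u \in C^2(\overline{B_1(0)})$.

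On the inner region $[0,1-\epsilon]$, Proposition \ref{prop:phi1phi2V} yields $\phi_{11} \equiv \phi_{22} \equiv -\tfrac{1}{8}$ and $\phi_2 \equiv 0$, so (\ref{eq:rad}) collapses to
\begin{equation*}
(u_I^r)''(r) + \tfrac{1}{r}(u_I^r)'(r) + \Bigl(8\lambda - \tfrac{\nu^2}{r^2}\Bigr)u_I^r(r) = 0,
\end{equation*}
which is Bessel's equation of order $\nu$ with frequency $\sqrt{8\lambda}$. The requirement that $u$ be continuous (indeed $C^2$) at the origin rules out the $Y_\nu$ branch, which diverges there, so up to an overall constant $c$ the inner solution must be $u_I^r(r) = c\, J_\nu(\sqrt{8\lambda}\,r)$. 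On the outer side, Proposition \ref{lem:0e2} shows that the $C^2$ constraint at the interior singularity $r=1-r_0$ selects the Neumann branch, so $u_O^r$ is determined up to scaling.

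Next I would translate the $C^2$ matching at $r = 1-\epsilon$ into pointwise conditions. Because each coefficient $\phi_{ij}$ is continuous across this circle and $u^r$ satisfies (\ref{eq:rad}) on both sides, $(u^r)''$ is algebraically determined by $u^r$ and $(u^r)'$ via the ODE, so continuity of the value and first derivative already forces continuity of the second derivative. Hence $C^2$ matching reduces to the two equations
\begin{align*}
c\, J_\nu\bigl(\sqrt{8\lambda}(1-\epsilon)\bigr) &= u_O^r(1-\epsilon), \\
c\sqrt{8\lambda}\, J_\nu'\bigl(\sqrt{8\lambda}(1-\epsilon)\bigr) &= (u_O^r)'(1-\epsilon),
\end{align*}
viewed as a homogeneous $2\times 2$ linear system for the scale parameters of $u_I^r$ and $u_O^r$. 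The determinant condition is precisely the stated transcendental equation.

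The main obstacle I anticipate is justifying the role of the hypothesis "$\epsilon$ sufficiently small." This is needed (i) so that Proposition \ref{lem:0e2} applies and the outer Neumann solution is well-defined and nontrivial — in particular, so that $u_O^r(1-\epsilon)$ and $(u_O^r)'(1-\epsilon)$ do not both vanish, which would collapse the matching to a triviality — and (ii) so that the reduction of $\phi_{ij}$ to constants on $[0,1-\epsilon]$ genuinely holds (i.e., $1-\epsilon > 1 - r_0$, placing the inner region strictly inside the elliptic zone). Everything else amounts to the explicit determinant computation and the verification that $Y_\nu$ is the excluded branch at the origin, both of which are routine once the reduction to Bessel's equation is in hand.
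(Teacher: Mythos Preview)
Your proposal is correct and follows essentially the same route as the paper's proof: identify $u_I^r$ as a multiple of $J_\nu(\sqrt{8\lambda}\,r)$ by regularity at the origin, then impose value and first-derivative matching at $r=1-\epsilon$ (with second-derivative continuity following from the ODE) to obtain the $2\times 2$ determinant condition. One minor slip in your discussion of the smallness hypothesis: the correct inequality is $1-\epsilon < 1-r_0$ (since $r_0\in(0,\epsilon)$ by Proposition~\ref{prop:phi1phi2V}), so the inner region automatically lies in the elliptic zone and this is not where the smallness of $\epsilon$ is actually used.
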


\quad This eigenvalue condition appears simpler than that presented in Proposition \ref{prop:DetEq}, when $D_\epsilon$ is defined over the unit interval. This is because the boundary layer in the disc is connected, so there are fewer matching conditions to impose. By Proposition \ref{lem:0e2}, the outer radial solution $u_O^r$ has Neumann boundary conditions on the elliptic region, and its behavior at $1-\epsilon$ can be determined using the Frobenius method.

\begin{proof}[Proof of Proposition \ref{prop:DetEq2}.] Because $u\in C^2\big(\overline{B_1(0)}\big)$, the eigenvalue $\lambda$ must simultaneously verify 
\begin{align}
    \lim_{r\to (1-\epsilon)^-} u_I^r(r)&=\lim_{r\to (1-\epsilon)^+} u_O^r(r)\nonumber \\ \lim_{r\to (1-\epsilon)^-} \left(u_I^r\right)'(r)&=\lim_{r\to (1-\epsilon)^+}\left( u_O^r\right)'(r). \nonumber
\end{align}
By (\ref{eq:rad}) and Proposition \ref{prop:phi1phi2V}, these two conditions guarantee that $u^r$ is $C^2$ in a neighborhood of $1-\epsilon$ and hence $C^2\big(\overline{B_1(0)}\big)$. Because $\lambda$ is nonnegative and (\ref{eq:SL2}) is singular near $r=0$, the inner solution is a bounded Laplacian eigenfunction of frequency $\sqrt{8\lambda}$, i.e. a Bessel function of the first kind. For $u$ to be nontrivial, the following matrix 
\begin{equation}
    \begin{pmatrix} J_\nu\left(\sqrt{8\lambda}(1-\epsilon)\right) & -u_O^r(1-\epsilon) \\ \sqrt{8\lambda}J_\nu'\left(\sqrt{8\lambda}(1-\epsilon)\right) & -\left(u_O^r\right)'(1-\epsilon)\end{pmatrix} \nonumber
\end{equation}
must have zero determinant. This condition equates eigenvalues to roots of the desired transcendental equation.
\end{proof}

\quad In combination, (\ref{eq:Frob2}) and (\ref{eq:Rec2}) alongside Proposition \ref{prop:DetEq2} provide a method for determining eigenvalues of $D_\epsilon$ on the disc. As displayed in Section \ref{sec:3}, the analytic predictions match numerical expectations. The eigenvalues also share a variational interpretation under the requirement that each eigenfunction be in $C^2\big(\overline{B_1(0)}\big)$. If we define the energy functional
\begin{equation}\label{eq:E2}
    E(f)=\frac{\displaystyle \int_0^{1-r_0}p(1-r)\left|f'(r)\right|^2 dr-\nu^2\int_0^{1-r_0} q(1-r)\left|f(r)\right|^2 dr  }{\displaystyle \int_0^{1-r_0} w(1-r)\left|f(r)\right|^2 dr}
\end{equation}
over all $f$ such that $E(f)$ is finite, then the minimizes of $E$ on orthogonal subspaces of $L^2([0,1-r_0],w)$ are solutions of (\ref{eq:rad}) with Neumann boundary conditions \cite[Ch. $6$]{CH}, in the sense that $$\lim_{r\to 1-r_0}p(1-r)f'(r)=0.$$ Note that the integrals in (\ref{eq:E2}) are purely radial because the eigenfunctions of $D_\epsilon$ are separable and any angular contribution scales out. The eigenfunctions are therefore orthogonal in $L^2\left(M_+,w\right)$ and likewise minimize the functional $\langle D_\epsilon f,f\rangle_{L^2(M_+,w)}$ as given in (\ref{eq:Buv}). The Neumann boundary condition matches that established in Section \ref{sec:1} on the unit interval, and hence Theorem \ref{thm:Energy2} holds on the disc.

\section{Numerical Comparison}\label{sec:3}

\quad In this section, we provide numerical evidence of eigenvalue convergence when $D_\epsilon$ is defined on the interval or disc. We also compare eigenvectors of $I-W$ with eigenfunctions of $D_\epsilon$ for small $\epsilon$. The eigenvectors can be explicitly computed with the regularization in (\ref{eq:regularizer}), and the eigenfunctions can be approximated using the Frobenius methods presented in Propositions \ref{lem:0e} and \ref{lem:0e2}.

\quad On the interval, Proposition \ref{prop:DetEq} provides an explicit equation for the analytic eigenvalues in terms of the left solution $u_L$. By imposing quasi-Neumann boundary conditions (\ref{eq:quasiInt}) on the elliptic region, there exists a unique local solution in the boundary layer that can be approximated to any level of precision via (\ref{eq:indPoly}) and (\ref{eq:RecRel}). In this manner, the analytic eigenvalues featured in this section are gathered using a $12$-term Frobenius series representation of $u_L$ in Proposition \ref{prop:DetEq}. Any additional terms in the series result in changes beyond the significant figures presented. 

\quad In Figure \ref{fig:IntLLE}, we present the first $5$ eigenvectors of $I-W$ on the unit interval with $\epsilon=0.05$. The first eigenvector is constant, as anticipated in Section \ref{sec:LLEmatrix}, and the other eigenvectors resemble Laplacian eigenfunctions. In Figure \ref{fig:ErrorInt}, we determine the eigenvalues of $I-W$ on the interval under variance in $n$, the number of sampled data points, and compare with the analytic eigenvalues predicted by Proposition \ref{prop:DetEq}. The relative errors, weighted to mirror Corollary \ref{prop:Est1}, are also featured in Figure \ref{fig:ErrorInt}.

\begin{figure}[H]
    \centering \hspace{11pt}
    \includegraphics[scale=0.98]{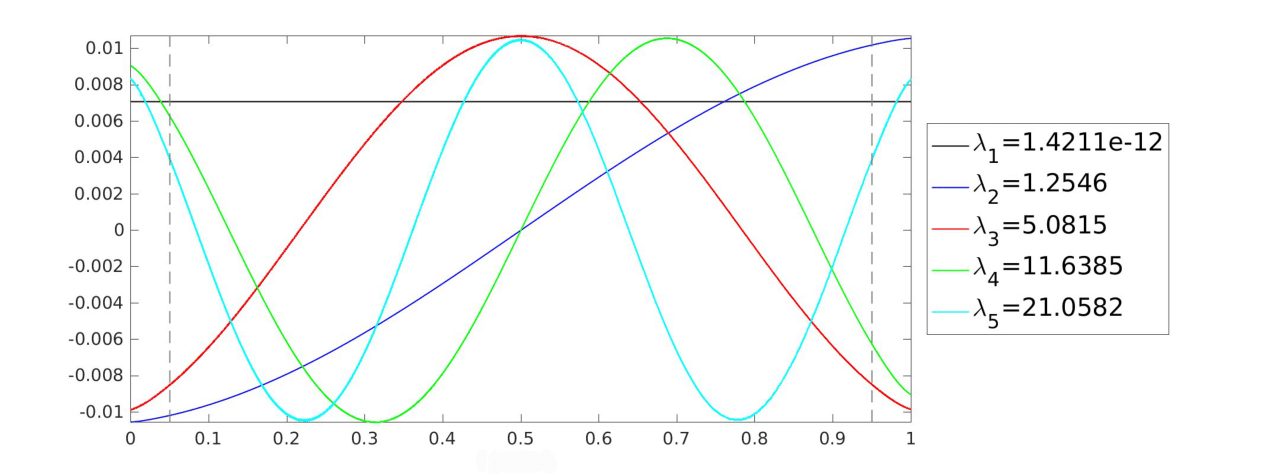}
    \caption{For $\epsilon=0.05$, the first $5$ eigenvectors of $I-W$ sampled on $20,000$ points in the interval are plotted above. Each is labeled with the respective eigenvalue, appropriately scaled by $\epsilon^{-2}$ according to (\ref{eq:Pconv}).}
    \label{fig:IntLLE}
\end{figure}

\begin{rem}
    In \cite[Fig.$5$]{WW}, Wu and Wu display the first $5$ eigenvectors of $I-W$ sampled on $8,000$ points in the interval with $\epsilon=0.01$. They discuss several irregularities that we believe arise due to the small number of data points relative to $\epsilon$. In particular, the eigenvector irregularity in the boundary layer is not present in Figure \ref{fig:IntLLE}.
\end{rem}

\begin{figure}[H]
    \centering
    \includegraphics[scale=0.55]{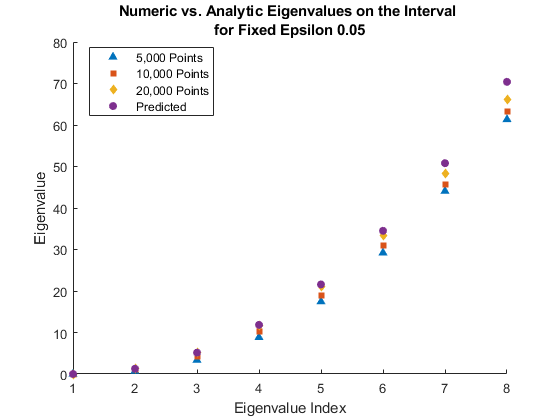}
    \includegraphics[scale=0.55]{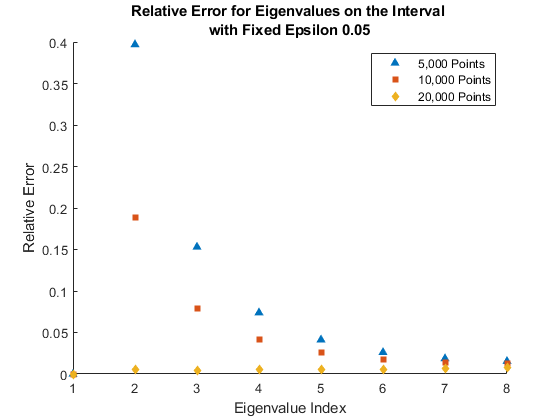}
    \caption{In the left figure, the first $8$ eigenvalues of $I-W$ are plotted for different data set sizes and fixed $\epsilon=0.05$. We consider eigenvalue sequences corresponding to $5,000$ points, $10,000$ points, and $20,000$ points before plotting the analytic expectation to display convergence. In the right figure, we plot the relative errors $\left|\lambda_j(I-W)-\lambda_j(D_\epsilon)\right|\left|\lambda_j(D_\epsilon)\right|^{-3/2}$ for each eigenvalue sequence.}
    \label{fig:ErrorInt}
\end{figure}

\vfill
\pagebreak

\begin{figure}[H]
    \centering
    \includegraphics[scale=0.54]{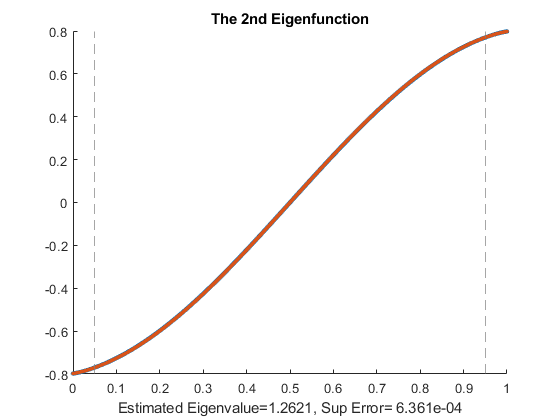} \hfill
    \includegraphics[scale=0.54]{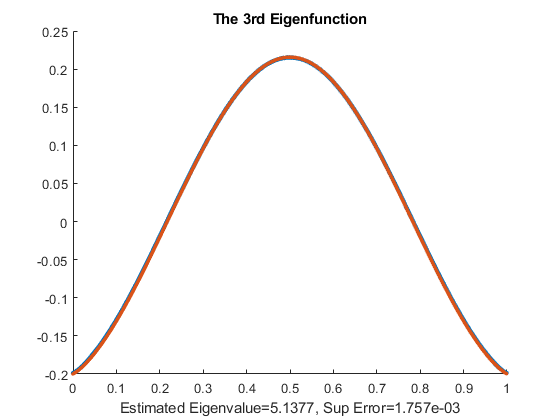} \\ \vspace{10pt}
    \includegraphics[scale=0.54]{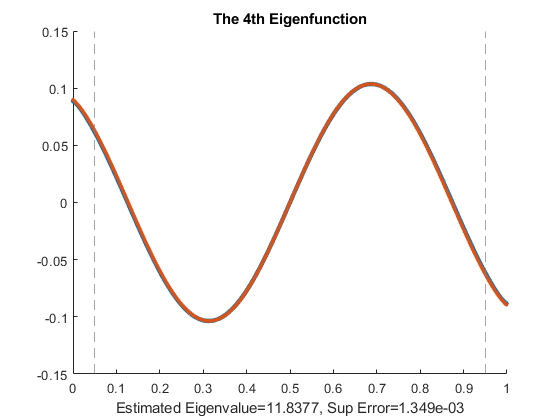} \hfill
    \includegraphics[scale=0.54]{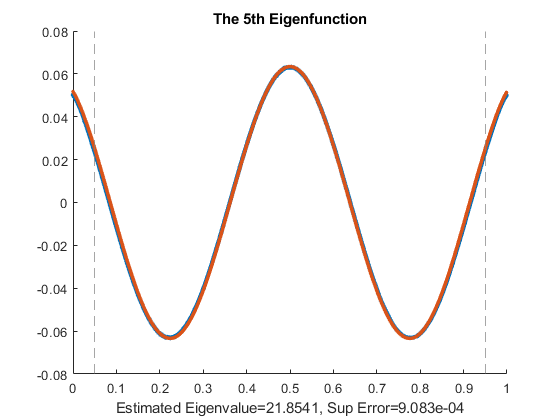}
    \caption{For $\epsilon=0.05$, several eigenfunctions of $D_\epsilon$ with quasi-Neumann boundary conditions (\ref{eq:quasiInt}) plotted above. Each eigenfunction has been discretized over $20,000$ points and is normalized to minimize error with the corresponding eigenvector of $I-W$, featured in Figure \ref{fig:IntLLE}. From top left to bottom right, we display the $2$nd, $3$rd, $4$th, and $5$th eigenfunctions, each labeled with the sup error (when compared with the corresponding eigenvector of $I-W$) and analytically-predicted eigenvalue}
    \label{fig:IntAnalytic}
\end{figure}

\pagebreak

\quad On the disc, Proposition \ref{prop:DetEq2} similarly provides an equation for the analytic eigenvalues in terms of the outer solution $u_O^r$ with quasi-Neumann boundary conditions (\ref{eq:quasiDisc}) on the elliptic region. We use (\ref{eq:Frob2}) and  (\ref{eq:Rec2}) to construct a $12$-term Frobenius series for the local solution in the boundary layer and gather eigenvalues using Proposition \ref{prop:DetEq2}. For comparison, eigenvalues and eigenvectors of $I-W$ are featured in Figure \ref{fig:160k}. Note that each eigenvector appears separable on the disc, with integer angular frequency. The first eigenvector is constant, as anticipated in Section \ref{sec:LLEmatrix}, and the other eigenvectors resemble Laplacian eigenfunctions on the disc.

\begin{figure}[H]
    \centering
    \includegraphics[scale=0.57]{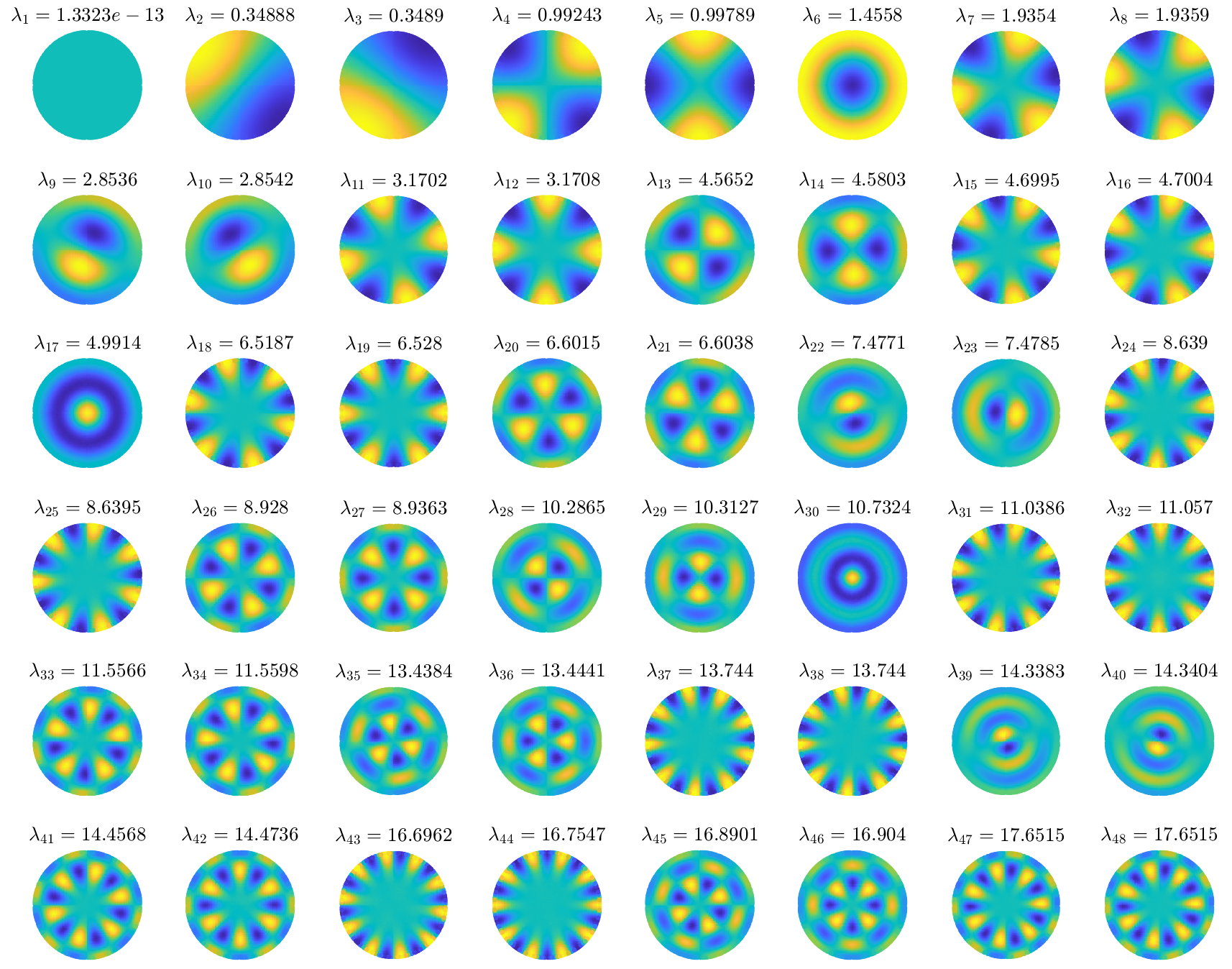}
    \caption{For $\epsilon=0.05$, the first $48$ eigenvectors of $I-W$ sampled on $160,000$ points in the disc are plotted from top left to bottom right. Each is labeled with the respective eigenvalue, appropriately scaled by $\epsilon^{-2}$ according to (\ref{eq:Pconv}). }
    \label{fig:160k}
\end{figure}

\quad In Figure \ref{fig:0.05}, we let $\epsilon=0.05$ and test an increasing sequence in $n$, the number of data points from which the LLE matrix $W$ is constructed. In particular, we let $\{x_j\}_{j=1}^n$ form a grid in $[-1,1]\times[-1,1]$ and remove points for which the norm is greater than $1$. Note that by taking $n$ too large, we leave the supercritical regime (\ref{eq:supercrit}), and the eigenvectors lose any resemblance to separable functions on the disc. To accompany the evidence of eigenvalue convergence, we also display several eigenfunctions of $D_\epsilon$ in Figure \ref{fig:FtoV}.

\vfill
\pagebreak

\begin{figure}[H]
    \centering
    \includegraphics[scale=0.54]{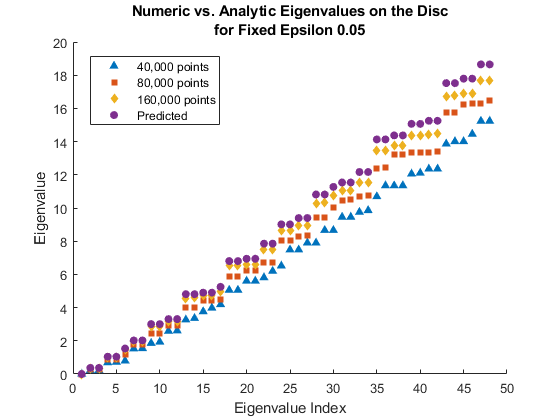} \hspace{4pt}
    \includegraphics[scale=0.54]{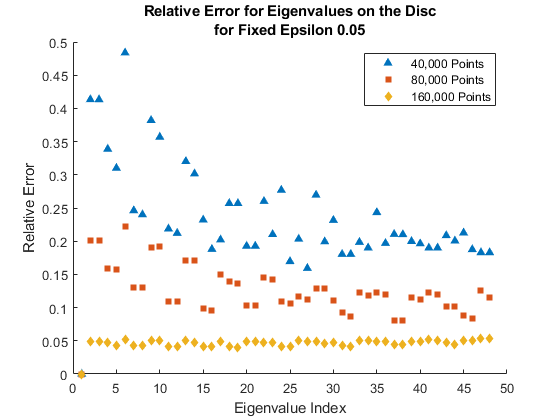}
    \caption{In the left figure, the first $48$ eigenvalues of $I-W$ are plotted for different data set sizes and fixed $\epsilon=0.05$. We consider eigenvalue sequences corresponding to $40,000$ points, $80,000$ points, and $160,000$ points before plotting the analytic expectation to display convergence. In the right figure, we plot the standard relative errors  $\left|\lambda_j(I-W)-\lambda_j(D_\epsilon)\right|\left|\lambda_j(D_\epsilon)\right|^{-1}$ for each eigenvalue sequence.}
    \label{fig:0.05}
\end{figure}
\vspace{-10pt}

\begin{figure}[H]
    \centering
    \includegraphics[scale=0.55]{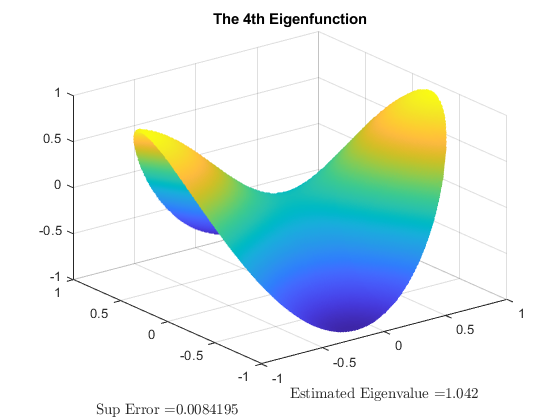}
    \hfill
    \includegraphics[scale=0.55]{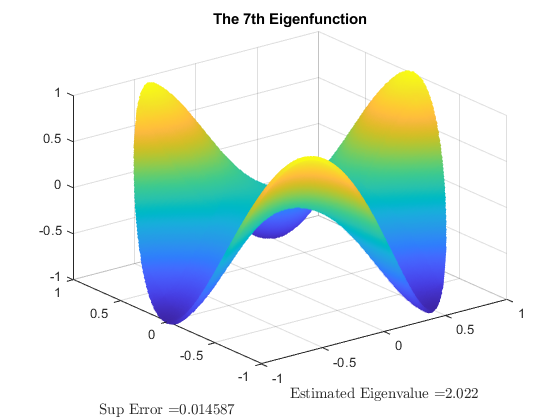} 
    \\ \vspace{10pt}
    \includegraphics[scale=0.55]{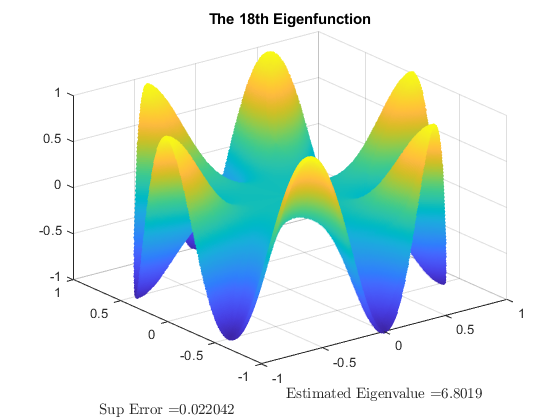}
    \hfill
    \includegraphics[scale=0.55]{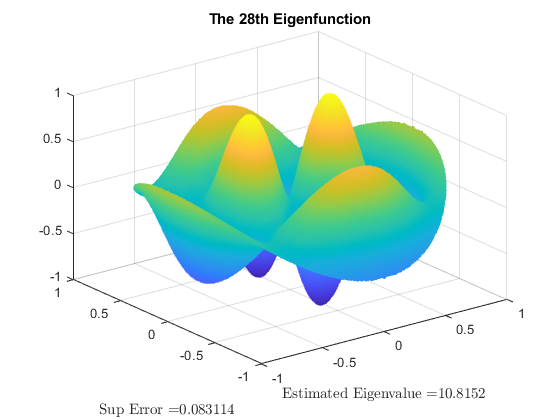}
    \caption{For $\epsilon=0.05$, several eigenfunctions of $D_\epsilon$ with quasi-Neumann boundary conditions (\ref{eq:quasiDisc}) are plotted above. Each eigenfunction has been discretized over $160,000$ points and is rotated and normalized to minimize error with the corresponding eigenvector of $I-W$, featured in Figure \ref{fig:160k}. 
    From top left to bottom right, we display the $4$th, $7$th, $18$th, and $28$th eigenfunctions, each labeled with the sup error (when compared with the corresponding eigenvector of $I-W$) and analytically-predicted eigenvalue.}
    \label{fig:FtoV}
\end{figure}
\vspace{-10pt}

\section{Proof of Theorem \ref{thm:blah}.}\label{sec:6}

\quad In this section, we use a technique from calculus of variations \cite[Ch. $7$]{BVB} to recover quasi-Neumann boundary conditions from a min-max principle governed by Definition \ref{ass:form} and prove Theorem \ref{thm:blah}. Let $M$ be a smooth, compact Riemannian manifold with smooth boundary and let $D_\epsilon$ be admissible on $M$. Unless otherwise stated, we work in the Hilbert space $L^2(M_+,w)$ where $M_+$ is the elliptic region in (\ref{eq:OmegaPM}) and $w$ is given in Definition \ref{ass:form}. Set
\begin{equation}
    V=\{v\in L^2(M_+,w): \langle D_\epsilon v,v\rangle<\infty\} \nonumber
\end{equation}
and suppose that the $j$th eigenvalue of $D_\epsilon$ on $M$ can be written
\begin{equation}
    \lambda_j=\max_{v_1,\dots, v_{j-1}\in L^2(M_+,w)}\left\{\min_{v\in V\cap \{v_1,\dots, v_{j-1}\}^\perp\backslash\{0\}}\frac{\langle D_\epsilon v,v\rangle}{\langle v,v\rangle}\right\}. \nonumber
\end{equation}
Given \ref{eq:Buv}, the minimum over each orthogonal subspace is attained by solutions to (\ref{eq:PDE}) on $M_+$. Let $u$ be an eigenfunction of $D_\epsilon$ with $||u||=1$ and set
\begin{align}
    J_v(t)=\frac{\langle D_\epsilon(u+tv),u+tv\rangle}{||u+tv||} \nonumber
\end{align}
for $v\in V\cap \{v_1,\dots, v_{j-1}\}^\perp\backslash\{0\}$. Then, because $u$ is a minimizing function and $A_\epsilon$ is symmetric,
\begin{align}\label{eq:Jt=0}
    \frac{1}{2}\frac{d}{dt}J_v(t)\big|_{t=0}= \Re \langle D_\epsilon u,v\rangle - J_v(0)\Re \langle u,v\rangle=0.
\end{align}

\quad In particular, (\ref{eq:Jt=0}) holds true for arbitrary $v\in V\cap \{v_1,\dots, v_{j-1}\}^\perp\backslash\{0\}$. By separating (\ref{eq:Jt=0}) into interior and boundary integrals, we recover an Euler Lagrange equation dictating (\ref{eq:PDE}) on $M_+$ with $\lambda=J_v(0)$. To determine the boundary conditions that the eigenfunctions satisfy, consider the first term in (\ref{eq:Buv}),
\begin{equation}
\begin{aligned}\label{eq:first}
    \frac{1}{2(d+2)}\int_{M\backslash M_\epsilon} \nabla u(x)\overline{\nabla v(x)}dv_g =&-\frac{1}{2(d+2)}\int_{M\backslash M_\epsilon}\Delta u(x) \overline{v(x)}dv_g \\ &+\frac{1}{2(d+2)}\int_{\p M_\epsilon} \frac{\p u}{\p \nu}(x) \overline{v(x)}d\sigma
\end{aligned}
\end{equation}
where $d\sigma$ is the measure on $\p M_\epsilon$ and $\nu$ is the outward-pointing unit normal along the boundary. According to \cite{WW}, $D_\epsilon$ is equal to the Laplacian scaled by $\frac{1}{2(d+2)}$ outside the boundary layer, and so the interior integral in (\ref{eq:first}) agrees with $\langle D_\epsilon u,v\rangle_{L^2(M\backslash M_\epsilon)}$. Meanwhile, by applying integration by parts to the second term in (\ref{eq:Buv}), we find
\begin{equation}\label{eq:second}
\begin{aligned}
    \int_{M_+\cap M_\epsilon}p(x_d)\frac{\p u}{\p x_d}(x)\overline{\frac{\p v}{\p x_d}}(x)dx=&-\int_{M_+\cap M_\epsilon} \frac{\p}{\p x_d}\left(p(x_d)\frac{\p u}{\p x_d}(x)\right)\overline{v(x)}dx  \\
    &+p(\epsilon)\int_{\p M_\epsilon}  \frac{\p u}{\p x_d}(x',\epsilon) \overline{v(x',\epsilon)}dx' \\ &- p(x_+)\int_{\p M_+}\frac{\p u}{\p x_d}(x',x_+)\overline{v(x',x_+)}dx'
\end{aligned}
\end{equation}
where $(x',x_d)$ denotes the Fermi coordinates in $M_\epsilon$ and $x_+$ denotes the $d$th Fermi coordinate for points along $\p M_+$. The interior integral in (\ref{eq:second}) captures the degenerate behavior of $D_\epsilon$ in $M_\epsilon$. Because $D_\epsilon$ is admissible, the boundary integrals in (\ref{eq:first}) and (\ref{eq:second}) sum to
\begin{equation}\label{eq:bdryint}
    -p(x_+)\int_{\p M_+}\frac{\p u}{\p x_d}(x',x_+) \overline{v(x',x_+)}dx'. 
\end{equation}
By construction, $-\frac{\p}{\p x_d}$ is the (outward) normal derivative on $M_+$, and by (\ref{eq:Jt=0}), we can set (\ref{eq:bdryint}) equal to zero. Because $v$ is arbitrary, we conclude that $$p(x)\frac{\p u}{\p x_d}(x)=0$$ for $x$ along $\p M_+$. Thus, $u$ has quasi-Neumann boundary conditions on the elliptic region (\ref{eq:OmegaPM}), and Theorem \ref{thm:blah} holds.

\appendix\section{A Variational Convergence Example}\label{sec:appendix}

\quad In this section, we demonstrate how the higher order correction in (\ref{eq:Pconv}) might force a regularity condition on the eigenfunctions of the leading order operator $D_\epsilon$ on the unit interval. As shown in Lemma \ref{lem:0e}, absent boundary conditions, there are two linearly independent solutions to (\ref{eq:PDE}) in the boundary layer. Both solutions are smooth almost everywhere, meaning there is zero probability of sampling either function at a point of low regularity. However, by taking $\epsilon$ to be supercritical with respect to the sample size (\ref{eq:supercrit}), it is guaranteed that a subset of the boundary layer will be sampled in the LLE algorithm. We provide a condition for which eigenvalue convergence may be penalized by the presence of singular points near this subset. 

\quad According to (\ref{eq:Pconv}), the matrix $I-W$ behaves like the differential operator $D_\epsilon$ with a correction. To the author's knowledge, a precise description of this correction is unknown, although the derivation in \cite[Appendix F.]{WW} indicates that it is a linear differential operator of degree no less than three. Without any imposed regularity, consider the differential operator $\widetilde{D_\epsilon}$ given by
\begin{equation}\label{eq:Detilde}
    \widetilde{D_\epsilon}f(x)\eqdef\phi_{2}(x)f''(x)+\phi_1(x)f'(x)+\epsilon\sum_{j=3}^J\phi_j(x) f^{(j)}(x)
\end{equation}
where $\phi_1,\phi_2$ are the piecewise functions in (\ref{eq:DeDef}) and $J\in\mathbb{N}$. Suppose each $\phi_j$ is uniformly bounded on $[0,1]$ and satisfies the symmetry condition $$\phi_j(x)=(-1)^j\phi_j(1-x).$$

\quad This operator is constructed to be a perturbation of $D_\epsilon$ in (\ref{eq:DeDef}) and remain consistent with properties of the LLE matrix discussed in Section \ref{sec:LLEmatrix}. If $u$ is an eigenfunction of $\widetilde{D_\epsilon}$ in the weak sense, normalized in $L^2(M_+,w)$, then the corresponding eigenvalue $\tilde{\lambda}$ can be written
\begin{equation}\label{eq:almostEnergy}
     -\int_{x_0}^{1-x_0}\left(p(x)u'(x)\right)'\overline{u(x)}\:dx +\epsilon \sum_{j= 3}^J\displaystyle \int_{x_0}^{1-x_0}w(x) \phi_j(x)u^{(j)}(x)\overline{u(x)}\:dx
\end{equation}
where $p,w$ are given in Proposition \ref{prop:SLproperties}. Note that, so long as each integral in (\ref{eq:almostEnergy}) is finite, $\tilde{\lambda}$ begins to resemble (\ref{eq:RRQuotient}) as $\epsilon\to 0$. However, there are conditions on the coefficients $\phi_j$ in (\ref{eq:Detilde}) for which only one function in Proposition \ref{lem:0e} allows (\ref{eq:almostEnergy}) to remain finite.

\begin{assumption}\label{ass:PsiJ}
    There exists some $j\in\{3,\dots, J\}$ and some $0\leq k\leq j-2+(4+2\sqrt{3})\epsilon$ such that $$\lim_{x\to x_0^+} \phi_j(x)(x-x_0)^{-k}$$ is finite and nonzero.
\end{assumption}

\quad Under this assumption, integrability issues arise in (\ref{eq:almostEnergy}) whenever $u$ behaves asymptotically like $u_{\rm{Dir}}$ from Proposition \ref{lem:0e} near $x_0$. This is made precise by the following statement. 

\begin{lemma}\label{lem:condition}
    Let $f$ be defined as $$f(x)=\sin\beta \: u_{\rm{Dir}}(x)+\cos\beta \: u_{\rm{Neu}}(x)$$ for some $\beta\in[0,2\pi)$. Then $$\langle \widetilde{D_\epsilon}f,f\rangle_{L^2(M_+,w_1)}<\infty$$ if and only if $\beta=0$.
\end{lemma}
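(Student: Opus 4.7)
The plan is to localize the integral $\langle \widetilde{D_\epsilon}f,f\rangle_{L^2(M_+,w_1)}$ to a neighborhood of $x_0$ (and, by the imposed symmetry $\phi_j(x)=(-1)^j\phi_j(1-x)$, of $1-x_0$) and then track the leading order of the integrand there. Away from $\{x_0,1-x_0\}$, both $u_{\rm{Dir}}$ and $u_{\rm{Neu}}$ are smooth, so the integral is automatically finite on any compact subinterval of $M_+$. The $D_\epsilon$ portion of $\widetilde{D_\epsilon}$ contributes a term proportional to $\int_{M_+}w|f|^2\,dx$, which by the asymptotics of $w$ in Proposition \ref{prop:SLproperties} together with the Frobenius estimates in Proposition \ref{lem:0e} is finite for every $\beta$. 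Therefore only the higher-order correction $\epsilon\sum_{j=3}^J\int w(x)\phi_j(x)f^{(j)}(x)\overline{f(x)}\,dx$ is at issue.

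Next I would expand $f^{(j)}\overline{f}$ into the four bilinear pieces in $u_{\rm{Dir}},u_{\rm{Neu}}$, with respective coefficients $\sin^2\beta$, $\sin\beta\cos\beta$, $\sin\beta\cos\beta$, and $\cos^2\beta$, and compute the leading behavior of each near $x_0$. Combining Proposition \ref{lem:0e}, Proposition \ref{prop:SLproperties}, and Assumption \ref{ass:PsiJ}, one has $u_{\rm{Dir}}^{(j)}(x)\sim(x-x_0)^{1-(4+2\sqrt{3})\epsilon-j}$, $u_{\rm{Neu}}^{(j)}(x)=O(1)$, $w(x)\sim(x-x_0)^{(4+2\sqrt{3})\epsilon-1}$, and $\phi_j(x)\sim c(x-x_0)^k$ with $c\neq 0$ and $0\le k\le j-2+(4+2\sqrt{3})\epsilon$. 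The leading exponents of $w\phi_j u^{(j)}v$ at $x_0$ then come out to $k+1-j-(4+2\sqrt{3})\epsilon$ (Dir/Dir), $k-j$ (Dir/Neu), $k+(4+2\sqrt{3})\epsilon$ (Neu/Dir), and $k+(4+2\sqrt{3})\epsilon-1$ (Neu/Neu).

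An integral $\int_{x_0}^{x_0+\delta}(x-x_0)^{\alpha}\,dx$ converges iff $\alpha>-1$. For small $\epsilon$, the two Neumann-first entries satisfy this, but Assumption \ref{ass:PsiJ} is precisely tuned to make the Dir/Dir exponent at most $-1$, and the bound $k\le j-2+(4+2\sqrt{3})\epsilon<j-1$ forces the Dir/Neu exponent below $-1$ as well. These two divergent powers differ by a factor of $(x-x_0)^{1-(4+2\sqrt{3})\epsilon}$, so they sit at different asymptotic scales and cannot cancel one another for any single choice of $\beta$; finiteness therefore requires both $\sin^2\beta$ and $\sin\beta\cos\beta$ to vanish, i.e., $\sin\beta=0$, which under the stated parametrization gives $\beta=0$ (the companion $\beta=\pi$ merely replaces $f$ by $-f$). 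Conversely, when $\beta=0$ the function $f=u_{\rm{Neu}}$ is smooth with bounded derivatives near both endpoints, and finiteness follows immediately from the uniform boundedness of each $\phi_j$ and the integrability of $w$.

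The main obstacle is making the above asymptotic comparisons rigorous: one needs the subleading terms in the Frobenius expansion of $u_{\rm{Dir}}^{(j)}$ to be genuinely dominated by the quoted leading power, uniformly for small $\epsilon$, so that the coefficient multiplying each singular power identified above is truly nonzero. This is controlled by the convergent Frobenius recurrence (\ref{eq:RecRel}) already invoked in Proposition \ref{cor:est}, and a parallel analysis at $1-x_0$ using the $\phi_j$-symmetry then closes the argument.
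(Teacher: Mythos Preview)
Your argument is correct and mirrors the paper's proof: both show that the $D_\epsilon$ contribution is finite for every $\beta$ and then trace the divergence of the higher-order correction to the asymptotic $u_{\rm{Dir}}^{(j)}(x)\sim(x-x_0)^{1-(4+2\sqrt{3})\epsilon-j}$ arising from any nonzero Dirichlet component. Your exponent tabulation is more explicit than the paper's one-line appeal to this asymptotic; the small slip in the Neu/Dir entry (it should read $k$ rather than $k+(4+2\sqrt{3})\epsilon$) is harmless since both values exceed $-1$.
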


\quad Under Assumption \ref{ass:PsiJ}, the eigenvalues of $\widetilde{D_\epsilon}$ converge variationally to the eigenvalues of $D_\epsilon$ if and only if Neumann boundary conditions are imposed on the elliptic region in the limit. Equivalently, this requires eigenfunctions in $C^2([0,1])$.

\begin{proof}[Proof of Lemma \ref{lem:condition}.] Let $f$ be a linear combination of the Dirichlet and Neumann solutions parametrized by $\beta\in[0,2\pi)$. By Propositions \ref{lem:0e} and \ref{prop:SLproperties},
\begin{align}
    \int_{x_0}^{1-x_0}\left(p(x)f'(x)\right)'\overline{f(x)}dx <\infty \nonumber
\end{align}
for all $\beta$. However, while
\begin{equation}
    \sum_{j=3}^J\int_{x_0}^{1-x_0} w(x) \phi_j(x) u_{\rm{Neu}}^{(j)}(x)\overline{u_{\rm{Neu}}(x)}\:dx<\infty, \nonumber
\end{equation}
the same integral blows up if it features any contribution from the Dirichlet solution $u_{\rm{Dir}}$, for which $u_{\rm{Dir}}^{(j)}(x)$ is asymptotic to $(x-x_0)^{1-(4+2\sqrt{3})\epsilon-j}$ as $x\to x_0^+$. 
\end{proof}

\quad On the interval, we suspect that the higher order terms in (\ref{eq:Pconv}) takes the form given in (\ref{eq:Detilde}) and that Assumption \ref{ass:PsiJ} holds. For other manifolds, we state the following conjecture regarding the higher order operator.

\begin{conj}
    The $C^2$ regularity condition, as implemented in Theorems \ref{thm:1d} and \ref{thm:2d}, arises from the weak form of the full operator in (\ref{eq:Pconv}).
\end{conj}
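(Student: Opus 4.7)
Because this statement is conjectural, the plan is to outline an approach that mirrors the interval calculation of Lemma \ref{lem:condition} while lifting it to a general compact manifold $M$ with smooth boundary. The high-level strategy is: derive a sufficiently explicit form for the higher-order correction in (\ref{eq:Pconv}), pass to a symmetric bilinear form that extends (\ref{eq:Buv}), and show that the resulting form is finite on an eigenfunction if and only if the quasi-Neumann condition (\ref{eq:quasiM}) holds. In that case the variational principle of Theorem \ref{thm:blah} forces $u$ to lie in $C^2(M)$ automatically, which is what the conjecture asserts.

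First, I would revisit the pointwise expansion in \cite[Appendix F]{WW} and track the $O(\epsilon^3)$ term in (\ref{eq:Pconv}) as a linear differential operator $R_\epsilon$, analogous to the $\epsilon\sum \phi_j(x) f^{(j)}(x)$ correction in (\ref{eq:Detilde}). In Fermi coordinates $(x',x_d)$ the coefficients of $R_\epsilon$ should again be built from sigma-type functions, with normal-direction coefficients inheriting the same piecewise-analytic structure as $\phi_{11},\phi_{22},\phi_2$ in (\ref{eq:DeDef2}). The key technical output of this step is the identification of at least one term in $R_\epsilon$ whose coefficient vanishes with a prescribed rate at the degeneracy surface $\Gamma$, playing the role of Assumption \ref{ass:PsiJ} in higher dimensions.

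Second, I would construct a symmetric bilinear form $\mathcal{B}_\epsilon(u,v)$ on $L^2(M_+,w)$ by combining (\ref{eq:Buv}) with integration-by-parts applied to $\langle R_\epsilon u,v\rangle$. The point of this step is twofold: to produce the correct Euler--Lagrange equation agreeing with (\ref{eq:PDE}) up to the $O(\epsilon^3)$ order, and to isolate the boundary contributions that appear at $\Gamma$. The local Frobenius decomposition, established in Proposition \ref{lem:0e} on the interval and Proposition \ref{lem:0e2} on the disc, should generalize: near $\Gamma$ the eigenfunctions decompose as a linear combination of a Dirichlet-type branch $u_{\rm{Dir}}$ (with only H\"older regularity) and a Neumann-type branch $u_{\rm{Neu}}$ (smooth), and higher normal derivatives of $u_{\rm{Dir}}$ develop algebraic singularities of the form $(x_d - x_+)^{1-\zeta-j}$, with $\zeta$ as in Proposition \ref{prop:SL2properties}.

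Third, I would prove the analogue of Lemma \ref{lem:condition}: under the genericity condition on the leading singular coefficient in $R_\epsilon$, the integral $\mathcal{B}_\epsilon(u,u)$ converges if and only if the $u_{\rm{Dir}}$ component of $u$ near $\Gamma$ vanishes. Combined with the min-max characterization of Theorem \ref{thm:blah} applied to $\mathcal{B}_\epsilon$ in place of $\langle D_\epsilon\cdot,\cdot\rangle$, this would force every minimizer to be purely of Neumann type along $\Gamma$, and hence $C^2$ across $\Gamma$ by elliptic regularity in $M_+$.

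The main obstacle is step one: obtaining $R_\epsilon$ explicitly enough to verify the higher-dimensional analogue of Assumption \ref{ass:PsiJ} in Fermi coordinates. The derivation in \cite[Appendix F]{WW} controls $R_\epsilon$ only in sup norm on smooth test functions, and does not record the precise vanishing order of its coefficients at $\Gamma$. Overcoming this likely requires repeating the Taylor expansion argument there but retaining the sigma-function dependence of the correction, in the spirit of how (\ref{eq:sigma}) produces the exact form of $\phi_{11},\phi_{22},\phi_2$. A secondary difficulty is that $\mathcal{B}_\epsilon$ need not be nonnegative or closable on all of $V$, so one may have to restrict to a form domain tailored to the degeneracy of $p$ along $\Gamma$ and verify self-adjointness of the associated operator using the weighted Hardy-type estimates suggested by Proposition \ref{prop:SL2properties}(3).
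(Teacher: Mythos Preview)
The paper does not prove this statement; it is explicitly left as a conjecture following Lemma \ref{lem:condition}, with the appendix supplying only a motivating example on the interval under Assumption \ref{ass:PsiJ}. So there is no proof in the paper to compare your proposal against.

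That said, your outline is a faithful and sensible extrapolation of the paper's own heuristic. The paper's appendix argues on $[0,1]$ that if the higher-order correction in (\ref{eq:Pconv}) has the form (\ref{eq:Detilde}) and Assumption \ref{ass:PsiJ} holds, then finiteness of the perturbed quadratic form singles out the Neumann branch (Lemma \ref{lem:condition}); your three steps lift exactly this mechanism to a general $M$ via Fermi coordinates, the Frobenius dichotomy of Propositions \ref{lem:0e} and \ref{lem:0e2}, and the variational framework of Theorem \ref{thm:blah}. You also correctly flag the real obstruction: the derivation in \cite[Appendix F]{WW} does not currently yield the coefficient-level control on $R_\epsilon$ needed to verify a higher-dimensional analogue of Assumption \ref{ass:PsiJ}. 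That is precisely why the paper leaves the statement conjectural, and your proposal does not resolve it either---it identifies what would need to be checked. As a research plan this is reasonable; as a proof it remains incomplete for the same reason the paper's is.
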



\bibliographystyle{plain}
\bibliography{LLEbib}


\end{document}